\DeclareMathOperator{\diag}{diag}
\newcommand{\masoud}[1]{  \ifthenelse{\boolean{showcomments}}
	{ \textcolor{red}{(Masoud says:  #1)}} {}  }
\newcommand{\slow}[1]{\ifthenelse{\boolean{showcomments}}
	{ \textcolor{red}{(Steven says:  #1)}}{}}
\def\ba{\begin{array}}
	\def\ea{\end{array}}
\newcommand{\beq}{\begin{align}}
	\newcommand{\eeq}{\end{align}}
\newcommand{\bq}{\begin{eqnarray}}
	\newcommand{\eq}{\end{eqnarray}}
\newcommand{\bqn}{\begin{eqnarray*}}
	\newcommand{\eqn}{\end{eqnarray*}}
\newcommand{\bee}{\begin{enumerate}}
	\newcommand{\eee}{\end{enumerate}}
\newcommand{\bi}{\begin{itemize}}
	\newcommand{\ei}{\end{itemize}}
\newcommand{\btab}{\begin{tabular}}
	\newcommand{\etab}{\end{tabular}}
\newtheorem{theorem}{Theorem}
\newtheorem{definition}{Definition}
\newtheorem{lemma}{Lemma}
\newtheorem{corollary}{Corollary}
\newtheorem{remark}{Remark}
\newtheorem{assumption}{Assumption}
\newcommand{\hL}{\mathcal{L}}
\newcommand{\hN}{\mathcal{N}}
\newcommand{\cD}{{\cal D}}
\newcommand{\cN}{{\cal N}}
\begin{document}

{\title{\LARGE Reverse and Forward Engineering of Local Voltage Control\\in Distribution Networks}
\author{Xinyang Zhou, Masoud Farivar, Zhiyuan Liu, Lijun Chen and Steven Low
		\thanks{X. Zhou is with the National Renewable Energy Laboratory, Golden, CO 80401, USA (Email: xinyang.zhou@nrel.gov).}\thanks{Z. Liu and L. Chen are with College of Engineering and Applied Science, University of Colorado, Boulder, CO 80309, USA (Emails: \{zhiyuan.liu, lijun.chen\}@colorado.edu).} \thanks{M. Farivar is with Google. (Email: mfarivar@gmail.com).}\thanks{S. Low is with the Division of Engineering and Applied Science, California Institute of Technology, Pasadena, CA 91125, USA (Email: slow@caltech.edu).}
	\thanks{Preliminary results of this paper have been presented at IEEE Conference on Decision and Control, Florence, Italy, 2013 \cite{farivar2013equilibrium}, IEEE International Conference on Smart Grid Communications, Miami, FL, 2015 \cite{farivar2015local}, and Annual Allerton Conference on Communication, Control, and Computing, Allerton, IL, 2015 \cite{zhou2015pseudo}. }
}
\maketitle
}
\begin{abstract}
The increasing penetration of renewable and distributed energy resources in distribution networks calls for real-time and distributed voltage control. In this paper we investigate local Volt/VAR control with a general class of control functions,  and show that the power system dynamics with non-incremental local voltage control can be seen as distributed algorithm for solving a well-defined optimization problem (reverse engineering). The reverse engineering further reveals a fundamental limitation of the non-incremental voltage control: the convergence condition is restrictive and prevents better voltage regulation at equilibrium. This motivates us to design two incremental local voltage control schemes based on the subgradient and pseudo-gradient algorithms respectively for solving the same optimization problem (forward engineering). The new control schemes decouple the dynamical property from the  equilibrium property, and have much less restrictive convergence conditions. 
This work presents another step towards developing a new foundation -- network dynamics as optimization algorithms -- for distributed realtime control and optimization of future power networks.

\end{abstract}

\begin{keywords}
Distributed control and optimization, voltage regulation, network dynamics as optimization algorithms, reverse and forward engineering, power networks. 
\end{keywords}

\section{Introduction}\label{sect:intro}
Traditionally, given  the predictable and relatively slow changes in power demand, capacitor banks and load tap changers are switched a few times per day to regulate the voltage in distribution systems; see, e.g.,~\cite{baran1989optimala, baran1989optimalb}. However, with the increasing penetration of renewable energy resources such as photovoltaic (PV) and wind turbines in both residential and commercial settings \cite{mccrone2017global, ren21}, the intermittent and fast-changing renewable energy supply introduces rapid fluctuations in voltage that are beyond the capability of those traditional voltage regulation schemes and thus calls for new voltage control paradigms. 

\subsection{Inverter-Based Voltage Regulation}

Even though the current IEEE Standard 1547 \cite{standard1547a} requires distributed generation to operate at unity power factor, inverters can readily adjust real and reactive power outputs to stabilize voltages and cope with fast time-varying conditions. Indeed, the IEEE Standards group is actively exploring a new inverter-based Volt/Var control. Unlike the capacity banks or tap changers, inverters can push and pull reactive power much faster, in a much finer granularity and with low operation cost, enabling real-time distributed control that is needed for the future power grid with a large number of renewable and distributed energy resources. 

Inverter-based voltage regulation has been studied extensively in literature. Related work roughly falls into the following categories:
\begin{enumerate}
\item Centralized control: By collecting all the required information and computing a global optimal power flow (OPF) problem, a central controller determines optimal set-points for the whole system; see, e.g., \cite{kekatos2015stochastic,farivar2011inverter,lavaei2012zero,low2014convexa,low2014convexb}. Centralized control can incorporate general objectives and operational constraints, but suffers from considerable communication overhead and long computation time especially when the size of the system is large. So, it usually cannot provide fast control. 

\item Distributed control: For the OPF problems of certain structures,  one can design algorithms to distribute the computation with coordinating communication, which is conducted either between a central controller and agents in a hierarchical way, e.g., \cite{kekatos2016voltage, baker2017network, dall2014optimal, zhou2017incentive, zhou2017discrete, li2014real, bolognani2013distributed, magnusson2017voltage}, or among neighborhoods of individual agents without any central controller, e.g., \cite{peng2016distributed, vsulc2014optimal, shi2015distributed, magnusson2015distributed, bazrafshan2017decentralized, wu2017distributed}. Given the required communication infrastructure, the scheme based on distributed OPF algorithm can provide scalable voltage control. 

\item Local control: Based on only local information, local voltage control provides fast response and, without the need of communication, allows simple and scalable implementation; see, e.g., \cite{farivar2013equilibrium,simpson2017voltage, zhu2016fast,zhou2016local, zhou2016incremental, jahangiri2013distributed, robbins2013two, turitsyn2011options}. 
\end{enumerate}

In this paper, we focus on analysis and design of local voltage control. Characterization of local control, especially systemwide properties arising from the interaction between local controls, is challenging.  In literature, the work such as \cite{turitsyn2011options, andren2015stability} lack analytical characterization. Other work such as \cite{jahangiri2013distributed, robbins2013two} provide stability analysis but lack systemwide performance characterization. There are work that provide rigorous performance analysis for stability and systemwide performance, but are subject to control functions of particular type, e.g., linear control functions without deadband \cite{zhu2016fast,zhang2013local} and quadratic control functions  \cite{simpson2017voltage}.




\subsection{Reverse and Forward Engineering}

Different from other work, in this paper we consider local voltage control with general monotone control functions, and seek a principled way to guide systematic analysis and design of local voltage control with global perspective through the approach of reverse and forward engineering. We first develop models to understand the systemwide properties arising from the interaction between local controls, in particular, whether the power system dynamics with the existing controls can be interpreted as distributed algorithms for solving certain optimization problems, i.e., {\em network dynamics as optimization algorithms}. We then leverage the insights obtained form the reverse engineering to design new local volatge control schemes according to distributed algorithms for solving the resulting optimization problem  (or its variant that incorporates new design objective and/or constraints). 

Specifically, we first lay out a general framework for reverse engineering power system dynamics with non-incremental local voltage control with general control functions, and show that it can be seen as a distributed algorithm for solving a well-defined optimization problem. We characterize the condition under which the dynamical system converges, which is however very restrictive and prevents better voltage regulation at the equilibrium  (or optimum): aggressive control functions are preferred for better voltage regulation at equilibrium, while less aggressive ones are preferred for convergence. We are therefore motivated to find a way to decouple the dynamical property from the equilibrium property. 


Notice that the optimization-based model does not only provide a way to characterize the equilibrium and establish the convergence of power system dynamics with local control (i.e., reverse engineering), but also suggests a principled way to engineer the control  to achieve the desired property (i.e., forward engineering). In particular, new control schemes with better dynamical properties can be designed based on different optimization algorithms for solving the same optimization problem. Accordingly, we propose an incremental local voltage control scheme based on the (sub)gradient algorithm for solving the same optimization problem. This new control scheme decouples the equilibrium property and the convergence property, and has much less restrictive convergence condition. However, it converges to only within a small neighborhood of the equilibrium. Furthermore, it requires computing the inverse of the control function, which may incur considerable computation overhead. We thus propose another incremental local voltage control scheme based on a pseudo-gradient algorithm that has better convergence property and simpler implementation than the (sub)gradient control while achieving the same equilibrium. 


Similar idea of reverse and forward engineering based on the perspective of network dynamics as optimization algorithms has been applied to distributed real-time frequency control of the power system, e.g., \cite{chen2016reverse,li2016connecting,Zhao-2012-LC-SGC,Zhang-2013-ACC,Zhao-2014-TAC,Mallada-2014-IFAC,Dorfler-2014-CDC}, as well as synchronization of the network of coupled oscillators \cite{zhou2015new}. This paper presents another step towards developing a new foundation -- network dynamics as optimization algorithms -- for distributed realtime control and optimization of future power networks.

The rest of the paper is organized as follows. Section~\ref{sec:model} describes the system model and introduces the non-incremental local voltage control. Section~\ref{sec:eq} investigates the equilibrium and dynamical properties of the non-incremental local control by reverse engineering. Section~\ref{sec:forward1} proposes two incremental local voltage control schemes that decouple the equilibrium and convergence properties and have much less restrictive convergence conditions. Section~\ref{sec:numerical} provides numerical examples to complement the theoretical analysis, and Section~\ref{sec:conclusion} concludes the paper.


\section{Network Model and Local Voltage Control}\label{sec:model}
Consider a tree graph $\mathcal{G}=\{\hN \cup\{0\}, \hL\}$ that represents 
a radial distribution network consisting of $n+1$ buses and a set $\hL$ of undirected lines between these buses. Bus 0 is the substation bus (slack bus) and is  assumed to have a fixed voltage of $v_0=1$~p.u.  Let $\hN:=\{1, \ldots, n\}$. Due to the tree topology, we also have the cardinality of the line set $|\hL|=n$. 
For each bus $i\in \hN$, denote by $\hL_i \subseteq \hL$ the set of lines on the unique path from bus $0$ to bus $i$, $p_i^c$ and $p_i^g$ the real power consumption and generation respectively, and  $q_i^c$ and $q_i^g$  the reactive power consumption and generation respectively. Let $v_i$ be the magnitude of the complex voltage (phasor) at bus $i$. For each line $(i, j)\in \hL$, denote by $r_{ij}$ and $x_{ij}$ its resistance and reactance, and  $P_{ij}$ and $Q_{ij}$ the real and reactive power from bus $i$ to bus $j$. Let $\ell_{ij}$ denote the squared magnitude of the complex branch current (phasor) from bus $i$ to bus $j$. We summarize some of the notations used in this paper in Section~\ref{sec:not}.

\subsection{Notation}\label{sec:not}

        \begin{tabular}{ll}   
        $\hN$ & set of buses excluding bus $0$, $\hN:=\{1, ..., n\}$\\
        $\hL$ & set of power lines\\
        $\hL_i$ & set of lines from bus 0 to bus i \\ 
        $p_i^c, q_i^c$ & real, reactive power consumption at bus $i$\\
        $q_i^g, q_i^g$& real, reactive power generation at bus $i$\\
        $P_{ij}, Q_{ij}$ & real and reactive power flow from $i$ to $j$\\
        $r_{ij}, x_{ij}$ & resistance and reactance of line $(i,j)$\\
        $v_i$  &    magnitude of complex voltage at bus $i$\\
        $\ell_{ij}$ & squared magnitude of complex current of\\
        & line $(i,j)$\\
        $\Omega_i$& feasible power set of inverter $i$; $\Omega := \bigtimes_{\ i=1}^{\ n} \!\Omega_i$\\
        $[\,]_{\Omega_i}$ & projection onto set $\Omega_i$\\
        $\sigma_{max}(\,)$ & maximum singular value of a matrix\\
        $\lambda_{max}(\,)$ & maximum eigenvalue of a matrix\\
        $\overline{\alpha}_i$ & upper-bound of the (sub)derivative $df_i(v_i)/dv_i$; \\
        &$\overline{A}:=\diag\{\overline{\alpha}_1,\ldots,\overline{\alpha}_n\}\in\mathbb{S}_{++}^{N}$\\
        \end{tabular}
        
 \vspace{3mm}
 A quantity without subscript is usually a vector with appropriate components defined earlier, 
e.g., $v := (v_i, i\in\hN), q^g := (q_i^g, i\in \hN)$.

\subsection{Linearized Branch Flow Model} 



We adopt the following branch flow model  introduced in \cite{baran1989optimala, baran1989optimalb} ({\it DistFlow equations}) to model a \emph{radial} distribution system:
\begin{subequations}\label{eq:bfm}
	\begin{eqnarray}
		P_{ij} &=& p_j^c - p_j^g + \sum_{k: (j,k)\in \hL} P_{jk}+  r_{ij}  \ell_{ij}  \label{p_balance}, \\
		Q_{ij} &=&  q_j^c-q_j^g + \sum_{k: (j,k)\in \hL} Q_{jk} + x_{ij} \ell_{ij} \label{q_balance},\\
		v_j^2 &=&  v_i^2 - 2 \Big(r_{ij} P_{ij} + x_{ij} Q_{ij} \Big) + \Big(r_{ij}^2+x_{ij}^2\Big) \ell_{ij} \label{v_drop},\\
		\ell_{ij}v_i^2 &=&   P_{ij}^2 + Q_{ij}^2  \label{currents}.
	\end{eqnarray}
\end{subequations}
Following \cite{baran1989network} we assume that the terms involving $\ell_{ij}$ are zero for all $(i,j) \in \hL$ in \eqref{eq:bfm}.
This approximation neglects the higher order real and reactive power loss terms.
Since losses are typically much smaller than power flows $P_{ij}$ and $Q_{ij}$, it only
introduces  a small relative error, typically on the order of $1\%$. 
We further assume that $v_i \approx 1,\ \forall i$ so that we can set $v_j^2 - v_i^2 = 2 (v_j - v_i)$ in equation \eqref{v_drop}. This approximation introduces a small relative error of at most $0.25\%$ if there is a $5\%$ deviation in voltage magnitude.

With the above approximations \eqref{eq:bfm} is simplified to the following linear model:
\begin{eqnarray*}
	P_{ij} &=& \sum_{k\in \beta(j) }\left(p_k^c - p_k^g\right)  \label{p_balance_lin}, \\
	Q_{ij} &=&  \sum_{k\in \beta(j) }\left( q_j^c-q_j^g \right) \label{q_balance_lin}, \\ 
	v_i - v_j & = &  r_{ij} P_{ij} +  x_{ij} Q_{ij}    \label{v_drop_lin},
\end{eqnarray*}
where $\beta (j)$ is the set of all descendants of  bus $j$ including bus $j$ itself, i.e., $\beta(j)=\left\{ i | \hL_j \subseteq \hL_i\right\}$. 
This yields an explicit solution for $v_i$ in terms of $v_0$ (which is given and fixed):
\begin{eqnarray*}\label{Voltage_explicit}
	&&v_0  - v_i~=   \sum\limits_{(j,k)\in \hL_i} {r_{jk} P_{jk}}  + \sum\limits_{(j,k)\in \hL_i} {x_{jk} Q_{jk}}\nonumber \\
	&=&  \sum\limits_{(j,k)\in \hL_i} {r_{jk} \left( \sum_{h\in \beta(k) }\left(p_h^c - p_h^g\right) \right)}  + \sum\limits_{(j,k)\in \hL_i} {x_{jk} \left( \sum_{h\in \beta(k) }\left( q_h^c-q_h^g \right)  \right)} \nonumber \\
	&=&  \sum_{j\in \hN}\left(p_j^c - p_j^g\right) { \left( \sum_{(h,k)\in \hL_i \cap \hL_j} \!\!\!\!r_{hk} \right)}  + \sum_{j\in \hN}\left(q_j^c - q_j^g\right) { \left( \sum_{(h,k)\in \hL_i \cap \hL_j} \!\!\!\!x_{hk} \right)} \nonumber \\
	&=&  \sum_{j\in \hN}R_{ij}\left(p_j^c - p_j^g\right)   + \sum_{j\in \hN} X_{ij}\left(q_j^c - q_j^g\right),
\end{eqnarray*}
where 
\begin{eqnarray}
	R_{ij}:= \!\!\! \sum_{(h,k)\in \hL_i \cap \hL_j}\!\!\!\! r_{hk}, \ \ \ \ X_{ij}:=\!\!\!\! \sum_{(h,k)\in \hL_i \cap \hL_j}\!\!\!\! x_{hk}  \label{X_def}.
\end{eqnarray}
\begin{figure}[htbp]
	\centering
	\includegraphics[width=.32\textwidth]{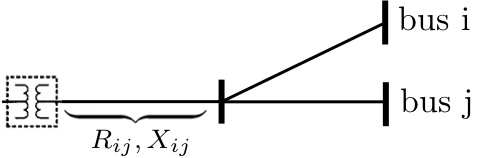}
	\caption{$\hL_i \cap \hL_j$ for  two arbitrary buses $i, j$ in the  network and the corresponding mutual voltage-to-power-injection sensitivity factors $R_{ij}, X_{ij}$.}
	\label{fig:RX}
\end{figure}
\noindent

Fig.~\ref{fig:RX} gives an illustration of  $\hL_i \cap \hL_j$ for two arbitrary buses $i$ and $j$ in a radial network and the corresponding $R_{ij}$ and $X_{ij}$. Since
	\begin{subequations}
	\begin{eqnarray}
	R_{ij} & = & \frac{\partial v_i}{\partial p^g_j}  \ = \  - \frac{\partial v_i}{\partial p^c_j}, \\
	X_{ij} & = & \frac{\partial v_i}{\partial q^g_j} \ = \ - \frac{\partial v_i}{\partial q^c_j},
	\end{eqnarray}
	\end{subequations}
$R_{ij}$, $X_{ij}$ are also referred to as the mutual voltage-to-power-injection sensitivity factors.

Define a resistance matrix $R=[R_{ij}]_{n\times n}$ and a reactance matrix $X=[X_{ij}]_{n\times n}$. 
Both matrices are symmetric.
With the matrices $R$ and $X$ the linearized branch flow model can be summarized compactly as:
\begin{eqnarray}  
	v &=& \overline{v}_0 + R (p^g - p^c) + X(q^g - q^c),
\end{eqnarray}
where $\overline{v}_0 = [v_0, \dots, v_0]^{\top}$ is an $n$-dimensional vector. In this paper we assume that $\overline{v}_0, p^g, p^c, q^c$ are given constants.  The only variables are (column) vectors $v := [v_1, \dots, v_n]^{\top}$ of squared voltage magnitudes
and $q^g := [q^g_1, \dots, q^g_n]$ of generated reactive powers. Let  $\tilde{v} = \overline{v}_0 + R (p^g - p^c) - X q^c$,  which is a constant vector. For notational simplicity, we will henceforth ignore the superscript in $q^g$ and write $q$ instead. Then the linearized branch flow model reduces to the following simple form:
\begin{eqnarray}  \label{model_2}
	v &=& Xq + \tilde{v}.
\end{eqnarray}

We have the following result. 
\begin{lemma}\label{lemma:X}
	The matrices $R$ and $X$ are positive definite.\hfill$\Box$
\end{lemma}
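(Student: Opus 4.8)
The plan is to produce an explicit symmetric factorization of both matrices of the form $C\Lambda C^\top$, with $C$ an invertible $n\times n$ matrix and $\Lambda$ a positive definite diagonal matrix; positive definiteness of $R$ and $X$ is then immediate. The key observation is that the summation defining $R_{ij}$ and $X_{ij}$ in \eqref{X_def} is a ``shared‑path'' inner product, and that the shared‑path structure of a rooted tree is encoded by an invertible — indeed triangular — $0/1$ matrix.

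First I would relabel the lines by their child endpoints: since $\mathcal{G}$ is a tree rooted at bus $0$, every $i\in\hN$ has a unique parent $\pi(i)$ on the path $\hL_i$, and the map $i\mapsto \ell_i:=(\pi(i),i)$ is a bijection from $\hN$ onto $\hL$. Under this relabeling, a line $\ell_m$ lies in $\hL_i$ precisely when bus $m$ is an ancestor of bus $i$ in the rooted tree (with the convention that $i$ counts as an ancestor of itself). Define $C\in\mathbb{R}^{n\times n}$ by $C_{im}=1$ if $m$ is an ancestor of $i$ and $C_{im}=0$ otherwise, and set $\Lambda_r:=\diag(r_{\ell_1},\ldots,r_{\ell_n})$ and $\Lambda_x:=\diag(x_{\ell_1},\ldots,x_{\ell_n})$, which are positive definite because all line resistances and reactances are strictly positive. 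Then, since a line $\ell_m$ belongs to $\hL_i\cap\hL_j$ iff $C_{im}C_{jm}=1$, the definition \eqref{X_def} rewrites as $R_{ij}=\sum_{m\in\hN}C_{im}C_{jm}\,r_{\ell_m}$, i.e. $R=C\Lambda_r C^\top$, and likewise $X=C\Lambda_x C^\top$.

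Next I would argue that $C$ is invertible. Ordering the buses so that every ancestor precedes its descendants (for instance, by nondecreasing depth in the rooted tree), the condition $C_{im}=1$ forces $m$ to be an ancestor of $i$ and hence $m\le i$, while $C_{ii}=1$. Thus $C$ is lower triangular with unit diagonal, so $\det C=1$ and $C$ is invertible. Consequently, for any nonzero $z\in\mathbb{R}^n$ we have $C^\top z\neq 0$, and therefore $z^\top R z=(C^\top z)^\top\Lambda_r(C^\top z)>0$ and likewise $z^\top X z>0$; this shows that $R$ and $X$ are positive definite.

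The only step needing genuine care is the combinatorial identification ``$\ell_m\in\hL_i$ iff $m$ is an ancestor of $i$'' together with the claim that $i\mapsto\ell_i$ is a bijection $\hN\to\hL$ — these are exactly what make the factor $C$ square and triangular, and everything else is bookkeeping. (An equivalent route, if one prefers to avoid the factorization, is to note that $R$ is the Gram matrix of the vectors $u_i:=\sum_{(h,k)\in\hL_i}\sqrt{r_{hk}}\,e_{(h,k)}$ and to prove these are linearly independent by the same triangular argument; the $C\Lambda C^\top$ formulation seems cleaner and handles $R$ and $X$ simultaneously.)
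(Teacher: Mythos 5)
Your proof is correct, but it takes a genuinely different route from the paper. The paper argues by induction on the number of non-root buses, splitting into two cases according to the degree of bus $0$: if the degree exceeds $1$ the tree splits into subtrees and $X$ is block-diagonal with positive definite blocks; if the degree is $1$ then $X$ is a positive definite principal block (padded with a zero row/column) plus the rank-one matrix $x\,\mathbf{1}\mathbf{1}^{\top}$, and one checks directly that this sum is positive definite. Your argument instead exhibits the explicit factorization $X=C\Lambda_x C^{\top}$ (and $R=C\Lambda_r C^{\top}$), where $C$ is the ancestor--descendant path matrix and $\Lambda_x$ collects the line reactances; after reordering buses so that ancestors precede descendants, $C$ is unit lower triangular, hence invertible, and positive definiteness follows in one line. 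All the combinatorial steps you flag are sound: the child-endpoint relabeling $i\mapsto(\pi(i),i)$ is indeed a bijection $\hN\to\hL$ because a rooted tree on $n+1$ nodes has exactly $n$ edges, each the parent edge of a unique non-root node, and $\ell_m\in\hL_i$ iff $m$ is an ancestor of $i$ (with $i$ counted as its own ancestor) is exactly the description of the root-to-$i$ path. The reordering is harmless since positive definiteness is invariant under simultaneous permutation of rows and columns. Your approach buys an explicit Cholesky-type factorization (and, e.g., $\det X=\prod_m x_{\ell_m}$) and handles $R$ and $X$ in one stroke; it is essentially the structural proof the paper delegates to the cited alternative reference. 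The paper's induction is more elementary in that it avoids introducing the path-incidence matrix, but it still has to verify the rank-one-update step in its second case, which is no shorter than your triangularity argument.
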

	\begin{proof} The proof uses the fact that the resistance and reactance values of power lines in the network are all positive. Here we give a proof for the reactance matrix $X$, and exactly the same argument applies to the resistance matrix $R$.   
	
	We prove by induction on the number $k$ of buses in the network, excluding bus 0 (the root bus). The base case of  $k=1$ corresponds to a two-bus network with one line. Here $X$ is obviously a positive scalar that is equal to the reactance of the line connecting the two buses. 
	
	Suppose that the theorem holds for all $k\leq n$. For the case of $k=n+1$ we consider two possible network topologies as shown in Figure \ref{fig:PSD}:
	\begin{figure}[htbp]	
		\centering
		\subfigure[Case1:  degree of bus $0$ is greater than 1]{
			\label{subfig:template_signalerr_125}
			\includegraphics[width=.18\textwidth]{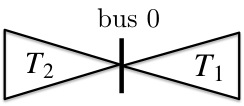}
		}
		\hspace{0.2in}
		\subfigure[Case 2:  degree of  bus 0 is 1]{
			\centering
			\label{subfig:template_signal_125}
			\includegraphics[width=.18\textwidth]{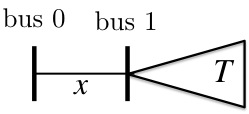}
		}
		\caption{Two possible network structures} 
		\label{fig:PSD}
	\end{figure}
	
	{\em Case 1:  bus $0$ is of degree greater than 1.} 
	Split the network into two different trees rooted at bus $0$, denoted by $T_1$ and $T_2$, each of which has no more than $n$ buses excluding bus $0$. Denote by $X_1$ and $X_2$ respectively the reactance matrices of $\mathcal{T}_1$ and $\mathcal{T}_2$. By induction assumption $X_1$ and $X_2$ are positive definite. Note that the set $\hL_i$ of lines on the unique path from bus $0$ to bus $i$  must completely lie inside either $\mathcal{T}_1$ or $\mathcal{T}_2$, for all $i$. Therefore, by definition (\ref{X_def}), the reactance matrix $X$ of the network has the following block-diagonal  form:
	\begin{equation}  \nonumber
	X_{ij}=\left\{ \begin{array}{rcl}
	X_{1_{ij}},  ~~ i,j\in \mathcal{T}_1~~\\
	X_{2_{ij}}, ~~i,j\in \mathcal{T}_2~~\\
	0, ~~~~\mbox{otherwise}  \\ 
	\end{array}\right.
	~ \Rightarrow~ X=\left[\begin{array}{cc}
	{{X_1}} & 0\\
	0 & {{X_2}}
	\end{array}\right].
	\end{equation}
	Since $X_1$ and $X_2$ are positive definite, so is $X$. 
	
	{\em Case 2: bus $0$ is of degree 1.} 
	Suppose without loss of generality that bus 0 is connected to bus $1$. 
	Denote by $x$ the reactance of the line connecting buses $0$ and $1$, and $\mathcal{T}$ the tree rooted at bus $1$, excluding bus $0$.
	Denote by $Y$ the reactance matrix of $\mathcal{T}$, and by induction assumption, $Y$ is positive definite. Note that, for all buses $i$ in the network, the set $\hL_i$ includes the single line that connects buses $0$ and $1$.  Therefore, by definition (\ref{X_def}), the reactance matrix X has the following form:
	\begin{equation}  \nonumber
	X_{ij}=\left\{ \begin{array}{rccl}
	Y_{ij}+x, ~~~ i, j\in \mathcal{T}\\
	x, ~~~~~~ \mbox{otherwise}  \\ 
	\end{array}\right.
	\Rightarrow~ X=\left[\begin{array}{cc}
	x  & \ldots x\\
	\vdots &  \vdots\\
	x & \ldots x
	\end{array}\right] + \left[\begin{array}{cc}{{0}} & 0\\
	0 & {{Y}}
	\end{array}\right],
	\end{equation}
	One can verify that, when $Y$ is positive definite and $x$ is positive,  $X$ is positive definite. This concludes the proof.
\end{proof}

We also refer to \cite{ zhu2016fast} for an alternative proof of the same result.

\subsection{Inverter Model}
 
At each bus $i$ there is an inverter that can generate non-negative real power $p_i$ and reactive power $q_i$ that can have either sign. $p_i$ and $q_i$ are constrained by the apparent power capability $s_i$ of the inverter as follows:
 \begin{eqnarray}
 0\leq p_i\leq s_i,\ \  0\leq |q_i|\leq s_i,\ \  p_i^{2}+q_i^{2}\leq s_i^2.\label{eq:apparent}
 \end{eqnarray}
 Consider power ratio $\cos\rho_i$ with $0\leq \rho_i \leq \pi/2$ such that
 \begin{eqnarray}
 p_i/s_i \geq  \cos\rho_i.\label{eq:powerratio}
 \end{eqnarray}
Given non-controllable $p_i\leq s_i$, the feasible (reactive) power set $\Omega_i$ for inverter $i$ is given by:
 \begin{eqnarray}
 \Omega_i &:=& \left\{q_i \ \big| \ {q_i}^{\text{min}}  \leq q_i \leq {q_i}^{\text{max}}\right\},
 \end{eqnarray}
 where, based on (\ref{eq:apparent})--(\ref{eq:powerratio}), 
 \begin{eqnarray*}
 q_i^{\max}&=&\min\left\{p_i\tan\rho_i,\sqrt{s_i^2-p_{i}^{2}} \right\},\\
 q_i^{\min}&=&\max\left\{-p_i\tan\rho_i,-\sqrt{s_i^2-p_{i}^{2}} \right\}. 
 \end{eqnarray*}
Here, $p_{i}$ is further assumed to be sized appropriately to provide enough freedom in $q_i$ \cite{turitsyn2011options}.
For buses without controllable inverters, one can set $q_i=q_i^{\max}=q_i^{\min}$ and $\Omega_i$ becomes a singleton. Define $\Omega := \bigtimes_{\ i=1}^{\ n} \Omega_i$ for notational simplicity.

%
%
%
%

\subsection{Local Volt/VAR Control}

The goal of Volt/VAR control in a distribution network is to maintain the bus voltages $v$ to within a tight range around their nominal values $v^{\text{nom}}_i=1~\text{p.u.}, ~i\in\hN$ by provisioning reactive power injections $q$. 
This can be modeled as a feedback dynamical system with state $\big(v(t), q(t)\big)$ at discrete time $t$.   A general Volt/VAR control algorithm maps the current state $\big(v(t), q(t)\big)$ to a new reactive power injections $q(t+1)$.  The new $q(t+1)$ updates voltage magnitudes $v(t+1)$ according to \eqref{model_2}. Usually $q(t + 1)$ is determined either completely or partly by a certain Volt/VAR control function defined as follows:

\begin{definition}
	A Volt/VAR control function $f : \mathbb{R}^{n} \rightarrow  \mathbb{R}^{n} $ is a collection of	local control functions $f_i: \mathbb{R}\rightarrow\mathbb{R}$, each of which maps the current local voltage $v_i$  to a local control variable $u_i$ in reactive power at bus $i$:
	\begin{equation} \label{VVC_function}
		u_i  \ = \ f_i (v_i-v_i^{\text{nom}}).\quad \forall i\in \hN.
	\end{equation}
\end{definition}

The control functions $f_i$ are usually decreasing but not always strictly decreasing because of a potential deadband where the control signal $u_i$ is set to zero to prevent too frequent actuation.
We assume for each bus $i\in\mathcal{N}$ a symmetric deadband $(v_i^{\text{nom}}-\delta_i/2, v_i^{\text{nom}}+\delta_i/2)$ with $\delta_i\geq 0$ around the nominal voltage $v_i^{\text{nom}}$. The following assumptions are made for $f_i$.
\begin{assumption}\label{A1}
	 The control functions $f_i$ are non-increasing in $\mathbb{R}$ and strictly decreasing and differentiable in $(-\infty, -\delta_i/2)\cup (\delta_i/2, +\infty)$. 
\end{assumption}
\begin{assumption}\label{A2}
	The derivative of the control function $f_i$ is upper-bounded, i.e., there exist $\overline{\alpha}_i>0$ such that $|f_i^{\prime}(v_i)|\leq\overline{\alpha}_i$ for all feasible $v_i\in(-\infty, -\delta_i/2)\cup (-\delta_i/2,\delta_i/2) \cup (\delta_i/2, +\infty),~\forall i\in\hN$.
\end{assumption}

Assumption~\ref{A2} ensures that an infinitesimal change in voltage does not lead to a jump in the control variable. Define $\overline{A}:=\diag\{\overline{\alpha}_1,\ldots,\overline{\alpha}_n\}\in\mathbb{S}_{++}^{N}$, and let $M=\sigma_{\max}(\overline{A}X)$ denote the largest singular value of $\overline{A}X$. We have the following result. 

\begin{lemma}[Lipschitz continuity]\label{lemma:lipschitz}
Suppose Assumptions~\ref{A1}--\ref{A2} hold. For any $q,q'\in\Omega$, we have 
\begin{eqnarray}
\|f(v(q)-v^{\text{nom}})- f(v(q')-v^{\text{nom}})\|_2\leq M \|q-q'\|_2.\label{eq:contM}
\end{eqnarray}
\end{lemma}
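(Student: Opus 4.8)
The plan is to chain together three elementary facts: (i) the voltage map $q \mapsto v(q) = Xq + \tilde v$ is affine with ``gain'' $X$; (ii) each scalar control function $f_i$ is Lipschitz with constant $\overline\alpha_i$, by Assumptions~\ref{A1}--\ref{A2} and the mean value theorem; and (iii) composing these and bounding the resulting operator norm gives exactly $M = \sigma_{\max}(\overline A X)$. Concretely, first I would write $v(q) - v(q') = X(q-q')$, so that the $i$-th voltage deviation entering $f_i$ changes by $(X(q-q'))_i$. Then I would argue that $|f_i(a) - f_i(b)| \le \overline\alpha_i |a-b|$ for all feasible arguments $a,b$: on each of the three intervals in Assumption~\ref{A2} this is immediate from $|f_i'| \le \overline\alpha_i$ via the mean value theorem, and across the deadband endpoints $\pm\delta_i/2$ one uses continuity of $f_i$ (which follows from Assumption~\ref{A1}, since a non-increasing function that is differentiable on the two open half-lines and has the natural limiting values at $\pm\delta_i/2$ has no jumps there — the deadband value is pinned between the one-sided limits) together with a straightforward interval-splitting argument. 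The point to be slightly careful about is that $f_i$ is only piecewise differentiable, so the global Lipschitz bound on all of $\mathbb{R}$ must be assembled from the pieces rather than quoted directly.

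Next I would pass to vectors. Writing $d := q - q'$ and using $|f_i((Xq)_i - v_i^{\text{nom}}) - f_i((Xq')_i - v_i^{\text{nom}})| \le \overline\alpha_i |(Xd)_i|$ componentwise, we get
\begin{eqnarray*}
\|f(v(q)-v^{\text{nom}}) - f(v(q')-v^{\text{nom}})\|_2^2
&\le& \sum_{i=1}^n \overline\alpha_i^2 \, (Xd)_i^2 \\
&=& \| \overline A X d \|_2^2,
\end{eqnarray*}
where $\overline A = \diag\{\overline\alpha_1,\ldots,\overline\alpha_n\}$. Hence $\|f(v(q)-v^{\text{nom}}) - f(v(q')-v^{\text{nom}})\|_2 \le \|\overline A X d\|_2 \le \sigma_{\max}(\overline A X)\,\|d\|_2 = M\|q-q'\|_2$, which is exactly \eqref{eq:contM}.

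The main obstacle — really the only non-routine point — is handling the non-smoothness of $f_i$ at the deadband boundaries $\pm\delta_i/2$ to get a single Lipschitz constant valid on all of $\mathbb{R}$. I would dispatch this by the interval-splitting trick: given $a < b$, insert whichever of the breakpoints $-\delta_i/2, \delta_i/2$ lie in $(a,b)$, bound $|f_i|$'s increment on each resulting subinterval by $\overline\alpha_i$ times the subinterval length (using $|f_i'|\le\overline\alpha_i$ on the open pieces and one-sided continuity at the breakpoints), and sum. Everything after that is just the definition of the spectral norm. Note this argument does not even use positive definiteness of $X$ (Lemma~\ref{lemma:X}); only the affine structure of \eqref{model_2} and Assumptions~\ref{A1}--\ref{A2} are needed.
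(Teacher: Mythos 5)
Your proof is correct and takes essentially the same route as the paper's: a componentwise Lipschitz bound $|f_i(a)-f_i(b)|\le\overline\alpha_i|a-b|$ obtained by splitting at the deadband breakpoints and applying the mean value theorem on each piece, then assembled into $\|\overline{A}X(q-q')\|_2\le \sigma_{\max}(\overline{A}X)\|q-q'\|_2$. The only cosmetic difference is that you phrase the breakpoint handling as a general interval-splitting argument while the paper enumerates the cases explicitly; both tacitly rely on continuity of $f_i$ at $\pm\delta_i/2$.
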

\begin{proof}
Without loss of generality, assume that $v_i(q)\geq v_i(q')$. If both $v_i(q)$ and $v_i(q')$ are in $(-\infty, v_i^{\text{nom}}-\delta_i/2]$ or in $[v_i^{\text{nom}}+\delta_i/2, +\infty)$, by the mean value theorem we have $|f_i(v_i(q)-v_i^{\text{nom}})- f_i(v_i(q')-v_i^{\text{nom}})| \leq \bar{\alpha}_i |v_i(q)- v_i(q')|$. If both are in $[v_i^{\text{nom}}-\delta_i/2, v_i^{\text{nom}}+\delta_i/2]$, $0=|f_i(v_i(q)-v_i^{\text{nom}})- f_i(v_i(q')-v_i^{\text{nom}})| \leq \bar{\alpha}_i |v_i(q)- v_i(q')|$. If $v_i(q)\in [v_i^{\text{nom}}+\delta_i/2, +\infty)$ and $v_i(q')\in [v_i^{\text{nom}}-\delta_i/2, v_i^{\text{nom}}+\delta_i/2]$, $|f_i(v_i(q)-v_i^{\text{nom}})- f_i(v_i(q')-v_i^{\text{nom}})|= |f_i(v_i(q)-v_i^{\text{nom}})- f_i(\delta_i/2))|\leq \bar{\alpha}_i |v_i(q)- (v_i^{\text{nom}}+\delta_i/2)| \leq \bar{\alpha}_i |v_i(q)- v_i(q')|$, where the first inequality follows from the mean value theorem. Similarly, we can show that $|f_i(v_i(q)-v_i^{\text{nom}})- f_i(v_i(q')-v_i^{\text{nom}})| \leq \bar{\alpha}_i |v_i(q)- v_i(q')|$ holds under other situations too. Therefore,
\begin{eqnarray}
\nonumber \|f(v(q)-v^{\text{nom}})- f(v(q')-v^{\text{nom}})\|_2 \leq \|\overline{A} (v(q)-v(q'))\|_2, 
\end{eqnarray}
from which we have
\begin{eqnarray}
\nonumber \|f(v(q)-v^{\text{nom}})- f(v(q')-v^{\text{nom}})\|_2 &\leq& \|\overline{A} X (q-q')\|_2\\
\nonumber &\leq & M \|q-q'\|_2. 
\end{eqnarray}
\end{proof}


See Fig.~\ref{controlfunction} (left) for an illustrative example of a piecewise linear droop control function based on IEEE Standard 1547 \cite{standard1547a}:
\begin{eqnarray} \label{eq:plf}
f_i(v_i) = -\alpha_i \left[ v_i  - {\delta_i}/{2}\right]^{+} + \alpha_i \left[ - v_i - {\delta_i}/{2} \right]^{+}
\end{eqnarray}
with slope $-\alpha_i$ in $(-\infty, -\delta_i/2)$ and $(\delta_i/2, +\infty)$. Notice that our design and analysis in this paper are not limited to the linear control functions.
\begin{figure*}
	\centering
	\includegraphics[trim = 0mm 0mm 0mm 0mm, clip, scale=0.54]{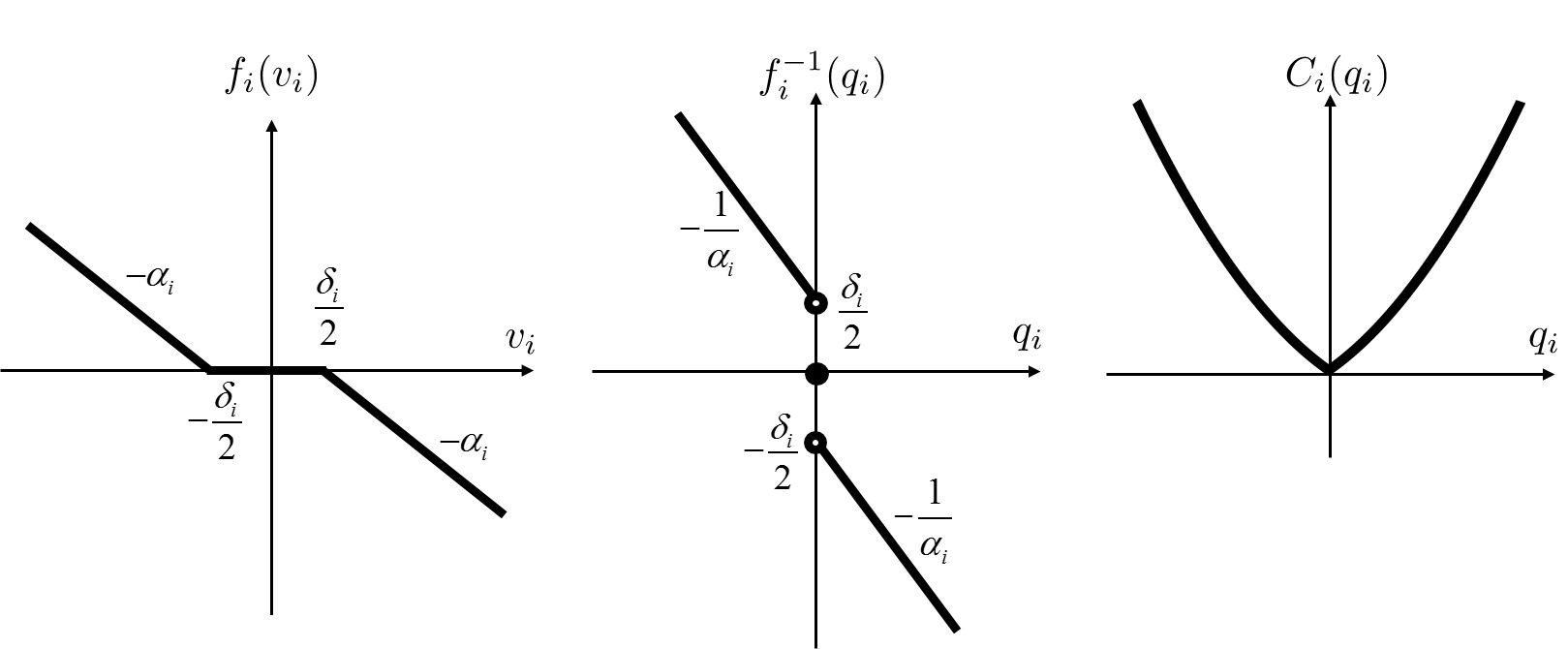}
	\caption{(left) The piecewise linear control function (\ref{eq:plf}), (middle) its inverse function~(\ref{eq:inverse}), and (right) the corresponding cost function~(\ref{eq:integral}). } \label{controlfunction}
\end{figure*}

Motivated by the IEEE Standard 1547, we consider a ``non-incremental'' control where the reactive power $q_i  = u_i,~i\in\hN$, and obtain the following 
dynamical system $\cD_1$ for the local Volt/VAR control:
\vspace{-1mm}
\begin{subequations}
	\begin{empheq}[left=\empheqlbrace]{align}
	\hspace{2mm}v(t)\hspace{2mm}&=\hspace{2mm}Xq(t)+\tilde{v}\label{dyn:D3a}\\
	\hspace{2mm}q_i(t+1)\hspace{2mm}&=\hspace{2mm}\big[f_i\big(v_i(t)-v_i^{\text{nom}}\big)\big]_{\Omega_i},~i\in\hN, \label{dyn:D1b}
	\end{empheq} 
\end{subequations}
where $[~]_{\Omega_i}$ denotes the projection onto the set $\Omega_i$.
A fixed point $\left({v}^*, q^* \right)$ of the above dynamical system, defined as follows, represents an equilibrium operating point of the network.
\begin{definition} 
	$\left({v}^*, q^* \right)$ is called an {\em equilibrium point} of $\cD_1$,
	if it satisfies 
	\begin{subequations}
	\begin{eqnarray}
	v^* & = &  Xq^* + \tilde{v},\\
	q^* & = & \big[f(v^*-v^{nom})\big]_{\Omega}.
	\end{eqnarray}
	\end{subequations}
\end{definition}


In the next section, we characterize the equilibrium  and dynamical properties of the system $\cD_1$ by showing that it is an distributed algorithm for solving a well-defined optimization problem. 

\section{Reverse Engineering}  \label{sec:eq}



Since $f_i$ is non-increasing, a (generalized) inverse $f_i^{-1}$ exists.
In particular, at the origin, we assign $f_i^{-1}(0) = 0$ corresponding to the deadband $[ - \delta_i/2,  + \delta_i/2]$ of $f_i$. This may introduce discontinuity to $f_i^{-1}$ at $q_i = 0$ if the deadband $\delta_i> 0$, i.e.,
\begin{eqnarray}
f_i^{-1}(0^+)\leq-\delta_i/2~\text{and}~f_i^{-1}(0^-)\geq\delta_i/2,\label{eq:discontinue}
\end{eqnarray}
where $0^+$ and $0^-$ represent approaching 0 from right and left, respectively.

Define a cost function for provisioning reactive power at each bus $i\in\hN$ as:
\begin{eqnarray}
C_i(q_i)  & := & - \int_{0}^{q_i} f_i^{-1}(q) \, dq,\label{eq:cost}
\end{eqnarray}
which is convex since $f_i^{-1}$ is decreasing.  Then, given $v_i(t)$,  $q_i(t+1)$ in \eqref{dyn:D1b} is the unique solution to the following optimization problem:
\bq
q_i({t+1}) & = & \underset{q_i\in\Omega_i}{\arg\min}
\ \  C_i(q_i) + q_i \left( v_i(t) - v_i^{\text{nom}} \right),
\label{eq:dynamic2b}
\eq
i.e., \eqref{dyn:D1b} and \eqref{eq:dynamic2b} are equivalent specification of
$q_i(t+1)$. 

Take for example  the piecewise linear control function (\ref{eq:plf}).  Its inverse is given by:
\bq
f_i^{-1}(q_i) & := & \left\{\begin{array}{lcl}
			 - \frac{q_i}{\alpha_i} + \frac{\delta_i}{2} 
						& & \text{if } q_i < 0,    \\
			0  & & \text{if } q_i = 0,    \\
			 - \frac{q_i}{\alpha_i} - \frac{\delta_i}{2} 
						& & \text{if } q_i > 0,						
			\end{array}  \right. \label{eq:inverse}
\eq

\noindent
and the corresponding cost function is given by:
\bq\label{eq:scf}
C_i(q_i) & = & \left\{ \begin{array}{lcl}
		\frac{1}{2\alpha_i}q_i^2 - \frac{\delta_i}{2} q_i  & \text{if} & q_i 
		\leq 0 ,
		\\
		\frac{1}{2\alpha_i}q_i^2 + \frac{\delta_i}{2} q_i & \text{if} & q_i \geq 0.
		\end{array}  \right. \label{eq:integral}
\eq
See Fig.~\ref{controlfunction} (middle and right) for illustration. 


\subsection{Equilibrium}

Consider the function $F(q): \Omega \rightarrow \mathbb R$:
\begin{eqnarray}
F(q) & := & C(q) + \frac{1}{2}q^{\top} X q + q^{\top} \Delta \tilde{v},\label{eq:obj}
\end{eqnarray}
where $C(q)=\sum_{i\in \hN} C_i(q_i)$ and $\Delta \tilde{v} := \tilde{v} - v^{\text{nom}}$, and a global optimization problem:
\begin{eqnarray}
\min_{q\in \Omega} ~~ F(q).
\label{eq:defminF}
\end{eqnarray}

\begin{theorem}\label{thm:eq}
Suppose Assumption~\ref{A1} holds.  Then $\cD_1$ has a unique equilibrium point. Moreover, a point $(v^*, q^*)$ is an equilibrium of $\cD_1$ if and only if $q^*$ is the unique optimal solution of \eqref{eq:defminF} and $v^* = Xq^* + \tilde{v}$.
\end{theorem}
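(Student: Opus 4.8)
The plan is to reduce everything to convex optimization, using the separable structure already exposed in the paragraph preceding the theorem. First I would record the two structural facts that do the work: (i) $F$ is strictly convex on $\Omega$, since $C(q)=\sum_{i\in\hN}C_i(q_i)$ is convex (each $C_i$ is convex, being the negative integral of the non-increasing map $f_i^{-1}$), the linear term $q^\top\Delta\tilde v$ contributes no curvature, and $\tfrac12 q^\top X q$ is strictly convex by Lemma~\ref{lemma:X}; and (ii) $\Omega=\bigtimes_{i}\Omega_i$ is a nonempty, compact, convex box. Together (i)–(ii) guarantee that \eqref{eq:defminF} has a \emph{unique} optimal solution $q^*$. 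Existence of an equilibrium of $\cD_1$ will then follow immediately by setting $v^*=Xq^*+\tilde v$ and checking the two fixed-point relations, which is exactly what the equivalence below establishes.

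For the equivalence I would characterize both sides by first-order conditions and show they coincide. On the optimization side, since $F$ is convex and $\Omega$ convex, $q^*$ solves \eqref{eq:defminF} if and only if the variational inequality $\langle g + Xq^* + \Delta\tilde v,\; q - q^*\rangle \ge 0$ holds for all $q\in\Omega$ for some subgradient $g\in\partial C(q^*)$; because both $C$ and $\Omega$ split across buses, this is equivalent to the $n$ decoupled conditions that each $q_i^*$ minimizes $C_i(q_i)+q_i\big(v_i^*-v_i^{\text{nom}}\big)$ over $\Omega_i$, where I write $v_i^*:=(Xq^*)_i+\tilde v_i$ so that $(Xq^*)_i+\Delta\tilde v_i=v_i^*-v_i^{\text{nom}}$. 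On the equilibrium side, the defining relations of $\cD_1$ are $v^*=Xq^*+\tilde v$ together with $q_i^*=\big[f_i(v_i^*-v_i^{\text{nom}})\big]_{\Omega_i}$ for all $i$; invoking the already-established equivalence of \eqref{dyn:D1b} and \eqref{eq:dynamic2b}, this last family of relations is precisely ``$q_i^*$ minimizes $C_i(q_i)+q_i\big(v_i^*-v_i^{\text{nom}}\big)$ over $\Omega_i$ for every $i$.'' Hence $(v^*,q^*)$ is an equilibrium of $\cD_1$ iff $v^*=Xq^*+\tilde v$ and $q^*$ solves \eqref{eq:defminF}. Uniqueness of the equilibrium then drops out of the uniqueness of the minimizer: any two equilibria share the same $q$-component, hence the same $v$-component.

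I expect the main obstacle to be the careful handling of the per-bus first-order conditions when the deadband is nontrivial, i.e., when $C_i$ is only piecewise smooth and $f_i^{-1}$ is discontinuous at $q_i=0$. One has to identify $\partial C_i(q_i)=\{-f_i^{-1}(q_i)\}$ for $q_i\neq0$ and $\partial C_i(0)=\big[-f_i^{-1}(0^-),\,-f_i^{-1}(0^+)\big]\supseteq[-\delta_i/2,\,\delta_i/2]$ using \eqref{eq:discontinue}, and then verify that the resulting variational/KKT condition on $\Omega_i$ reproduces exactly the projected control law $q_i^*=\big[f_i(v_i^*-v_i^{\text{nom}})\big]_{\Omega_i}$ — in particular, that $v_i^*-v_i^{\text{nom}}\in[-\delta_i/2,\delta_i/2]$ forces $q_i^*=0$. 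Since the excerpt already asserts the equivalence of \eqref{dyn:D1b} and \eqref{eq:dynamic2b} (and that \eqref{eq:dynamic2b} has a unique solution), I can lean on that statement and keep this step brief; the remaining ingredients — separability, strict convexity, compactness of $\Omega$ — are routine.
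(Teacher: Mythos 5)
Your proposal is correct and follows essentially the same route as the paper's proof: strict (strong) convexity of $F$ via Lemma~\ref{lemma:X} gives a unique minimizer, and the first-order variational inequality over the box $\Omega$, decoupled bus by bus through $\partial C_i$, is identified with the projected fixed-point relation $q^*=[f(v^*-v^{\text{nom}})]_{\Omega}$. Your handling of $\partial C_i(0)$ as $[-f_i^{-1}(0^-),-f_i^{-1}(0^+)]\supseteq[-\delta_i/2,\delta_i/2]$ is in fact slightly more careful than the paper's, which takes this interval to be exactly $[-\delta_i/2,\delta_i/2]$.
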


%

\begin{proof}
By Lemma \ref{lemma:X} the matrix $X$ is positive definite. This implies that the objective function $F(q)$ is \emph{strongly} convex. Hence, the first-order optimality condition for \eqref{eq:defminF} is both necessary and sufficient; moreover, \eqref{eq:defminF} has a unique optimal solution. We next relate it to the equilibrium point of $\cD_1$.

The subdifferential  of $F(q)$ is given by:
\bqn
\partial F (q) & = & \partial C(q) + Xq + \Delta \tilde v\\
& = & \partial C(q) + (Xq + \tilde v) - v^{\text{nom}},
\eqn
where, by the definition of $C_i(q_i)$,
\bqn
\partial C(q) & = & \big[ \partial C_1(q_1)\ , \ \dots\ , \ \partial C_n(q_n) \big]^{\top}
\eqn
with 

\bqn
\partial C_i(q_i) & = & \left\{ \begin{array}{lcl}
		-f_i^{-1}(q_i)  & \text{if} & q_i 
		\neq 0 ,
		\\
		\left[- \frac{\delta_i}{2},~\frac{\delta_i}{2} \right] & \text{if} & q_i = 0.
		\end{array}  \right. 
\eqn

By the optimality condition, 
$q^*$ is an optimum of \eqref{eq:defminF} \textit{iff} there exists a (sub)gradient $\nabla F(q^*)\in \partial F (q^*)$ such that 
\bqn
\nabla F(q^*)^{\top}(q-q^*)\geq 0,\ \forall q\in\Omega ,
\eqn
which is equivalent to:
\bqn
q^* = [f \left( Xq^* + \tilde v - v^{\text{nom}} \right)]_{\Omega}.
\eqn

It follows that a point $(v^*, q^*)$ is an equilibrium of $\cD_1$ if and only if $q^*$ solves
\eqref{eq:defminF} and $v^* = Xq^* + \tilde{v}$. The existence and uniqueness of the optimal solution of \eqref{eq:defminF} then implies that of the equilibrium $(v^*, q^*)$.
\end{proof}

With $v=Xq+\tilde v$, the objective can be equivalently written as:
\begin{eqnarray}
\hspace{0mm}F(q, v)&\hspace{-2mm}=\hspace{-2mm}&C(q) + \frac{1}{2}(v-v^{\text{nom}})^{\top}X^{-1} (v-v^{\text{nom}}) -\frac{1}{2} \Delta\tilde v^{\top} X^{-1} \Delta\tilde v.\nonumber\\\label{eq:tradeoff}
\end{eqnarray}
Notice that the last term is a constant. Therefore, the local Volt/VAR control $\cD_1$ seeks an optimal trade-off between minimizing the cost of reactive power provisioning $C(q)$ and minimizing the cost of voltage deviation $\frac{1}{2}(v-v^{\text{nom}})^{\top}X^{-1} (v-v^{\text{nom}})$. 

\subsubsection{Further Characterization of Equilibrium}


The first term $C(q)$ of the objective \eqref{eq:tradeoff} is well-defined and has the desired additive structure. It is however not clear what specific structure the second term  $\frac{1}{2}(v-v^{\text{nom}})^{\top}X^{-1} (v-v^{\text{nom}})$ entails. We will further characterize this term in this subsection. 


Notice that bus $0$ has a fixed voltage magnitude, which decouples different subtrees rooted at it. Therefore, without loss of generality we only consider a topology where  the bus $0$ is of degree 1. 
Denote $\mathcal{T}$ the (sub)tree rooted at bus $1$ and $\hL_{T}$ the set of links of $\mathcal{T}$. Define an inverse tree $\mathcal{T}'$ that has the same sets of buses and lines as $\mathcal{T}$ but with reciprocal line reactance ${1}/{x_{ij}}$.
Let $\mathbb{L}\in\mathbb{R}^{n\times n}$ be the weighted Laplacian matrix of $\mathcal{T}'$ defined as follows:
\begin{eqnarray}
\mathbb{L}_{ij}=\left\{ \begin{array}{ll}
-{1}/{x_{ij}}, & (i,j) \in \hL_{T},\\ 
\sum_{(i,k) \in \hL} {1}/{x_{ik}}, &i = j,\\
0, &\text{otherwise.}
\end{array}\right.\nonumber
\end{eqnarray}
Recall that $x$ denotes the reactance of the line connecting buses $0$ and $1$, we have the following result by Liu {\em et al.} \cite{liu2017signal}.
\begin{theorem}[from \cite{liu2017signal}]\label{the:Xinverse}
Given the tree graph $\mathcal{G}=\{\hN \cup\{0\}, \hL\}$ with bus $0$ being of degree $1$ and its reactance matrix $X$ defined by \eqref{X_def},  the inverse matrix $X^{-1}$ has the following \textbf{explicit} form:
 \begin{eqnarray}\label{eq:x-1}
 X^{-1}=\mathbb{L}+\begin{bmatrix}
		1/x & 0 & \cdots & 0\\
		0& 0 & \cdots & 0\\
		\vdots&\vdots&\ddots&\vdots\\
		0 & 0 & \cdots&0
		\end{bmatrix}.
 \end{eqnarray}
\end{theorem}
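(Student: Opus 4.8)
The natural approach is to verify directly that the claimed matrix on the right-hand side of \eqref{eq:x-1} is indeed the inverse of $X$, i.e. to show that $X\big(\mathbb{L}+\tfrac{1}{x}e_1e_1^{\top}\big) = I$, exploiting the explicit combinatorial description of $X$ in \eqref{X_def}. Recall that $X_{ij} = \sum_{(h,k)\in\hL_i\cap\hL_j} x_{hk}$; since bus $0$ has degree $1$, every path $\hL_i$ contains the line $(0,1)$, so $X_{ij} = x + \sum_{(h,k)\in\hL_i^T\cap\hL_j^T} x_{hk}$, where $\hL_i^T$ is the path from bus $1$ to bus $i$ inside $\mathcal{T}$. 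Thus $X = xJ + \hat{X}$, where $J$ is the all-ones matrix and $\hat{X}$ is the analogous "path-resistance" matrix of the subtree $\mathcal{T}$ rooted at bus $1$ (with $\hat{X}_{1j}=\hat{X}_{i1}=0$).

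First I would establish the key identity $\hat{X}\,\mathbb{L} = I - e_1\mathbf{1}^{\top}$ (where $\mathbf{1}$ is the all-ones vector and $e_1$ the first standard basis vector). This is the heart of the argument: it says that, for a tree rooted at bus $1$, the path-sum matrix $\hat{X}$ and the reciprocal-reactance graph Laplacian $\mathbb{L}$ are "almost" inverse, the defect being exactly the rank-one term coming from the root. I would prove this entrywise. For a fixed column $j$, note $(\mathbb{L})_{\cdot j}$ has entries only at $j$ and its tree-neighbors; computing $(\hat{X}\mathbb{L})_{ij} = \sum_k \hat{X}_{ik}\mathbb{L}_{kj}$ reduces to a telescoping sum over the neighbors of $j$, using the fact that $\hat{X}_{ik}-\hat{X}_{i\,\mathrm{parent}(k)}$ equals $x_{k,\mathrm{parent}(k)}$ if $k$ lies on the path from $1$ to $i$ and $0$ otherwise. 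One then checks that the telescoping collapses to $1$ when $i=j$, to $0$ when $i\neq j$ and $i\neq 1$, and to $-1$ when $i=1$ (every tree-neighbor contribution survives). A clean way to organize this is to orient each edge away from the root and write $\mathbb{L} = B\,\mathrm{diag}(1/x_e)\,B^{\top}$ for the incidence matrix $B$ of $\mathcal{T}$; then $\hat{X} = B^{-\top}\,\mathrm{diag}(x_e)\,\tilde{B}^{-1}$ in an appropriate reduced (bus~$1$ removed) coordinate system, and the identity falls out of $B^{\top}e_1 = -\mathbf{1}_{\text{edges}}$ bookkeeping — but the entrywise telescoping is the more self-contained route.

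Given this identity, the rest is a short computation:
\begin{eqnarray*}
X\Big(\mathbb{L}+\tfrac{1}{x}e_1e_1^{\top}\Big) &=& (xJ+\hat{X})\Big(\mathbb{L}+\tfrac{1}{x}e_1e_1^{\top}\Big)\\
&=& xJ\mathbb{L} + J e_1e_1^{\top} + \hat{X}\mathbb{L} + \tfrac{1}{x}\hat{X}e_1e_1^{\top}.
\end{eqnarray*}
Since $\mathbf{1}^{\top}\mathbb{L}=0$ (Laplacian row/column sums vanish) we get $J\mathbb{L}=0$; since $\hat{X}e_1 = 0$ (bus $1$ has zero path-reactance to itself and all $\hat{X}_{i1}=0$) the last term vanishes; and $Je_1e_1^{\top} = \mathbf{1}e_1^{\top}$. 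Combining with $\hat{X}\mathbb{L}=I-e_1\mathbf{1}^{\top}$ yields $X(\cdots) = \mathbf{1}e_1^{\top} + I - e_1\mathbf{1}^{\top}$ — which is not quite $I$, so the symmetric pairing must be handled carefully: one instead checks $\big(\mathbb{L}+\tfrac1x e_1e_1^{\top}\big)X = I$ using $\mathbb{L}\hat{X} = I - \mathbf{1}e_1^{\top}$ (transpose of the identity above, since $\mathbb{L}$ and $\hat X$ are symmetric) together with $e_1^{\top}X = x\mathbf{1}^{\top}$ (row $1$ of $X$ is $x$ everywhere, as $\hat X_{1j}=0$), giving $(\mathbb{L}+\tfrac1x e_1 e_1^\top)X = (I - \mathbf 1 e_1^\top) + \tfrac1x e_1 (x\mathbf 1^\top) = I$. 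Finally, invertibility of $X$ is already guaranteed by Lemma~\ref{lemma:X}, so this one-sided check suffices to identify the inverse.

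The main obstacle is the entrywise verification of $\mathbb{L}\hat{X} = I - \mathbf{1}e_1^{\top}$: it requires a careful case analysis over the position of buses $i,j$ relative to each other and to the root in the tree, and getting the telescoping signs right. Everything after that is routine matrix algebra. An alternative, possibly cleaner, presentation is to simply cite \cite{liu2017signal} for the identity and present only the assembly step — but since the theorem is attributed to that reference, reproducing the short verification above is the honest self-contained option.
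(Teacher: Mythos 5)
The paper itself supplies no proof of this theorem --- it is imported verbatim from \cite{liu2017signal} --- so your decision to give a self-contained verification is reasonable, and your overall strategy (write $X=xJ+\hat X$ with $\hat X$ the path-sum matrix of the subtree rooted at bus $1$, prove that $\hat X$ and the Laplacian $\mathbb L$ are inverse up to a rank-one root defect, then assemble) is sound and does yield the result. One remark on the setup: the paper's formula for $\mathbb L_{ii}$ sums over $(i,k)\in\hL$, which would put the extra $1/x$ into $\mathbb L_{11}$ and double-count it against the separate $1/x$ term in \eqref{eq:x-1}; your implicit reading ($\mathbb L$ is the Laplacian of $\mathcal T'$ only, so $\mathbb L\mathbf 1=0$) is the one under which the theorem is true, and is clearly what is intended.

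However, your key identity is transposed, and this propagates into two false displayed equations. Since row $1$ of $\hat X$ is identically zero, row $1$ of $\hat X\mathbb L$ must vanish; the correct identity is therefore $\hat X\mathbb L=I-\mathbf 1e_1^{\top}$ (equivalently $\mathbb L\hat X=I-e_1\mathbf 1^{\top}$), not $\hat X\mathbb L=I-e_1\mathbf 1^{\top}$ as you state --- in your telescoping the exceptional value $-1$ occurs when $j=1$ and $i\neq 1$, not when $i=1$. Indeed, in block form with bus $1$ separated, $\hat X\mathbb L=\bigl[\begin{smallmatrix}0&0\\ \hat X_{22}\ell_{21}&\hat X_{22}L_{22}\end{smallmatrix}\bigr]=\bigl[\begin{smallmatrix}0&0\\-\mathbf 1&I\end{smallmatrix}\bigr]$ using $\hat X_{22}=L_{22}^{-1}$ and $\ell_{21}=-L_{22}\mathbf 1$. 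With the correct identity your \emph{first} computation already closes: $X\bigl(\mathbb L+\tfrac1xe_1e_1^{\top}\bigr)=\underbrace{xJ\mathbb L}_{0}+(I-\mathbf 1e_1^{\top})+\mathbf 1e_1^{\top}+\underbrace{\tfrac1x\hat Xe_1e_1^{\top}}_{0}=I$, so the detour to the other-sided product was forced only by the transposition error. Worse, your final line $(I-\mathbf 1e_1^{\top})+e_1\mathbf 1^{\top}=I$ is false as written (its $(2,1)$ entry is $-1$); it becomes correct once the identity is fixed to $\mathbb L\hat X=I-e_1\mathbf 1^{\top}$. This is a bookkeeping slip rather than a missing idea --- fix the orientation of the rank-one defect and the proof is complete.
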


With the above result, the cost function \eqref{eq:tradeoff} can be rewritten as: 
\begin{eqnarray}
F(q,v)&=&C(q)+\frac{1}{2}\left(\frac{(v_1-v^{\text{nom}})^2}{x}+\sum_{(i,j)\in\mathcal{L}_{T}}\frac{(v_i-v_j)^2}{x_{ij}}\right)\nonumber\\
&&\hspace{4mm}-\frac{1}{2} \Delta\tilde v^{\top} X^{-1} \Delta\tilde v.\label{eq:Fq2}
\end{eqnarray}
whose second term (i.e., the cost of voltage deviation) consists of two parts: the first part ${(v_1-v^{\text{nom}})^2}/{x}$ represents the cost of voltage deviation of the bus~1 from the nominal value, and the second part $\sum_{(i,j)\in\mathcal{L}_{T}}{(v_i-v_j)^2}/{x_{ij}}$ gives the cost of voltage deviation between the neighboring buses. This leads to a nice leader-follower structure where the first bus (the bus $1$) aims to attain the nominal voltage while each other bus tries to achieve the same voltage as that of the bus ``in front of'' it.

\subsection{Dynamics}\label{subsec:dynamics}
We now study the dynamic properties of the local Volt/VAR control $\cD_1$.

\begin{theorem}	\label{thm:cm}
	Suppose Assumptions~\ref{A1}--\ref{A2} hold.
	If 
	\begin{eqnarray}
		 \sigma_{\max}(\overline{A}X)<1,	\label{eq:cm1}
	\end{eqnarray}
	then the local Volt/VAR control $\cD_1$ converges to the unique equilibrium point $(v^*, q^*)$. Moreover, it converges exponentially fast to the equilibrium. 
\end{theorem}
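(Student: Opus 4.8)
The plan is to show that the map defining the dynamics $\cD_1$ is a contraction in the $\ell_2$ norm on $q$, and then invoke the Banach fixed point theorem to conclude convergence (in fact exponential convergence) to the unique equilibrium established in Theorem~\ref{thm:eq}. Define the map $T:\Omega\to\Omega$ by
\begin{eqnarray}
T(q)\ :=\ \big[f\big(v(q)-v^{\text{nom}}\big)\big]_{\Omega},\qquad v(q)\ =\ Xq+\tilde v,\nonumber
\end{eqnarray}
so that the dynamics \eqref{dyn:D3a}--\eqref{dyn:D1b} are exactly $q(t+1)=T(q(t))$ and an equilibrium $q^*$ is a fixed point of $T$. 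The key estimate is that for any $q,q'\in\Omega$,
\begin{eqnarray}
\|T(q)-T(q')\|_2\ \le\ \big\|f(v(q)-v^{\text{nom}})-f(v(q')-v^{\text{nom}})\big\|_2\ \le\ M\,\|q-q'\|_2,\nonumber
\end{eqnarray}
where the first inequality is non-expansiveness of the Euclidean projection onto the convex set $\Omega$ (which is a product of intervals, hence convex and closed), and the second inequality is exactly Lemma~\ref{lemma:lipschitz} with $M=\sigma_{\max}(\overline A X)$.

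First I would record that $\Omega$ is a nonempty, closed, convex subset of $\mathbb{R}^n$ (a box), so that the projection $[\,\cdot\,]_\Omega$ is well-defined, single-valued, and firmly non-expansive; in particular $\|[x]_\Omega-[y]_\Omega\|_2\le\|x-y\|_2$. Then I would combine this with Lemma~\ref{lemma:lipschitz} to get the displayed contraction bound with modulus $M<1$ by hypothesis \eqref{eq:cm1}. Next, since $\Omega$ (intersected with any compact set containing the relevant iterates, or directly since $\Omega$ is closed and $T$ maps $\Omega$ into itself) is a complete metric space under the Euclidean metric and $T$ is a contraction, the Banach fixed point theorem gives a unique fixed point $q^*\in\Omega$ and the bound $\|q(t)-q^*\|_2\le M^t\,\|q(0)-q^*\|_2$, which is the claimed exponential convergence. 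Finally, $v(t)=Xq(t)+\tilde v\to Xq^*+\tilde v=v^*$ with the same geometric rate (up to the factor $\|X\|_2$), and Theorem~\ref{thm:eq} identifies $(v^*,q^*)$ as the unique equilibrium, which also coincides with the unique minimizer of \eqref{eq:defminF}.

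The main obstacle — really the only non-formal point — is the Lipschitz estimate, but that has already been isolated and proved as Lemma~\ref{lemma:lipschitz}; its subtlety lies in handling the deadband, where $f_i$ is only non-increasing (not strictly decreasing) and where one must do a careful case analysis on the positions of $v_i(q)$ and $v_i(q')$ relative to the deadband endpoints $v_i^{\text{nom}}\pm\delta_i/2$, using the mean value theorem on the differentiable pieces and Assumption~\ref{A2} to bound the slope by $\overline\alpha_i$. Given that lemma, the remaining work is just assembling the projection non-expansiveness and the contraction mapping theorem, which is routine. One minor point I would be careful about: to apply the contraction theorem cleanly I should confirm that iterates stay in the closed set $\Omega$ (immediate from the projection in \eqref{dyn:D1b}), and that $T$ is well-defined on all of $\Omega$, i.e. $f$ is evaluated only at feasible voltages — which holds because $v(q)$ is an affine image of $q\in\Omega$ and Assumption~\ref{A2} is stated for all such feasible $v_i$.
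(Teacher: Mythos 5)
Your proposal is correct and follows exactly the paper's own argument: rewrite $\cD_1$ as the map $q\mapsto\big[f(Xq+\tilde v-v^{\text{nom}})\big]_{\Omega}$, combine the non-expansiveness of the projection onto $\Omega$ with Lemma~\ref{lemma:lipschitz} to get Lipschitz modulus $M=\sigma_{\max}(\overline{A}X)<1$, and invoke the contraction mapping theorem for exponential convergence. The extra care you take about $\Omega$ being closed and convex and about $T$ mapping $\Omega$ into itself is implicit in the paper but adds no new idea.
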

\begin{proof}
	Write $\cD_1$ equivalently as a mapping $g_1$:
	\begin{eqnarray}
	q(t+1) = g_1(q(t)) :=  \big[f( Xq(t) + \Delta \tilde v -v^{\text{nom}})\big]_{\Omega}.\label{eq:dynamic1}
	\end{eqnarray}
	By Lemma~\ref{lemma:lipschitz} and the non-expansiveness property of projection operator, given any feasible $q,q'$ we have
	\begin{eqnarray}
		\left\| g_1(q) - g_1(q') \right\|_2  \leq M\|q - q'\|_2,
       \end{eqnarray}
	 where $M = \sigma_{\max}(\overline{A}X)$. When condition \eqref{eq:cm1} holds, $M<1$ and thus the mapping $g_1$ is a contraction, implying that $(v(t), q(t))$ converges exponentially to the unique equilibrium point under $\cD_1$.
\end{proof}

We next develop a sufficient condition for \eqref{eq:cm1}, which is easier to verify in practice.
Define the following matrix norms for some $W\in\mathbb{R}^{m\times n}$:
\begin{eqnarray}
&&\|W\|_1=\max_{1\leq j\leq n}\sum_{i=1}^m|w_{ij}|,\ \ 
\|W\|_{\infty}=\max_{1\leq i\leq m}\sum_{j=1}^n|w_{ij}|,\nonumber\\
&&\|W\|_{2}=\sqrt{\lambda_{max}(W^{\top}W)}=\sigma_{max}(W),\nonumber
\end{eqnarray}
where $\lambda_{max}(\cdot)$ denotes the largest eigenvalue of a matrix.
By utilizing the following relationship among these matrix norms based on H\"{o}lder's inequality
\begin{eqnarray}
\|W\|_2\leq\sqrt{\|W\|_1\cdot\|W\|_{\infty}}\ ,\label{eq:holder}
\end{eqnarray}
we have the following sufficient condition for convergence of $\cD_1$.

\begin{corollary}
Suppose Assumptions~\ref{A1}--\ref{A2} hold. If
	\begin{eqnarray}
	\max_{i\in\cN}(\overline{\alpha}_i)\cdot\max_{i\in\cN}\left( \sum_{j\in\cN} X_{ij} \right)<1, 	\label{eq:cm2}
	\end{eqnarray}
	then  $\cD_1$ converges exponentially fast to the unique equilibrium point $(v^*, q^*)$.
\end{corollary}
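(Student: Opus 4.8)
The plan is to derive the stated condition \eqref{eq:cm2} as a sufficient condition for \eqref{eq:cm1} by bounding $\sigma_{\max}(\overline{A}X)$ from above using the H\"older-type norm inequality \eqref{eq:holder}, and then invoking Theorem~\ref{thm:cm}. First I would observe that $\overline{A}X$ is an $n\times n$ real matrix, so \eqref{eq:holder} applies with $W = \overline{A}X$:
\begin{eqnarray*}
\sigma_{\max}(\overline{A}X) \;=\; \|\overline{A}X\|_2 \;\leq\; \sqrt{\|\overline{A}X\|_1 \cdot \|\overline{A}X\|_{\infty}}.
\end{eqnarray*}
It therefore suffices to bound each of the two factors on the right by $\max_{i\in\cN}(\overline{\alpha}_i)\cdot\max_{i\in\cN}\big(\sum_{j\in\cN}X_{ij}\big)$, since then the product under the square root is at most the square of that quantity, which is strictly less than $1$ by hypothesis.

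For the factor $\|\overline{A}X\|_{\infty}$ I would use that $\overline{A}=\diag\{\overline{\alpha}_1,\ldots,\overline{\alpha}_n\}$ has nonnegative entries, so the $(i,j)$ entry of $\overline{A}X$ is $\overline{\alpha}_i X_{ij}$, and all $X_{ij}\geq 0$ (from the definition \eqref{X_def} as sums of positive reactances). Hence the $i$-th absolute row sum is $\overline{\alpha}_i\sum_{j\in\cN}X_{ij}$, and taking the max over $i$ gives $\|\overline{A}X\|_{\infty}=\max_{i\in\cN}\big(\overline{\alpha}_i\sum_{j\in\cN}X_{ij}\big)\leq \max_{i\in\cN}(\overline{\alpha}_i)\cdot\max_{i\in\cN}\big(\sum_{j\in\cN}X_{ij}\big)$. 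For the factor $\|\overline{A}X\|_1$, which is the maximum absolute column sum, the $j$-th column sum is $\sum_{i\in\cN}\overline{\alpha}_i X_{ij}\leq \max_i(\overline{\alpha}_i)\sum_{i\in\cN}X_{ij}$; using the symmetry of $X$ (stated after \eqref{X_def}), $\sum_{i\in\cN}X_{ij}=\sum_{i\in\cN}X_{ji}\leq\max_{i\in\cN}\big(\sum_{k\in\cN}X_{ik}\big)$, so $\|\overline{A}X\|_1\leq \max_{i\in\cN}(\overline{\alpha}_i)\cdot\max_{i\in\cN}\big(\sum_{j\in\cN}X_{ij}\big)$ as well. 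Combining the two bounds with the displayed inequality yields $\sigma_{\max}(\overline{A}X)\leq \max_{i\in\cN}(\overline{\alpha}_i)\cdot\max_{i\in\cN}\big(\sum_{j\in\cN}X_{ij}\big)<1$, so \eqref{eq:cm1} holds and Theorem~\ref{thm:cm} gives exponential convergence to $(v^*,q^*)$.

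The only subtlety — hardly an obstacle — is making sure the nonnegativity and symmetry of $X$ are invoked cleanly, so that the absolute values in the definitions of $\|\cdot\|_1$ and $\|\cdot\|_{\infty}$ can be dropped and the two factors collapse to the same bound; the symmetry of $X$ is what lets the column-sum bound for $\|\overline{A}X\|_1$ match the row-sum bound for $\|\overline{A}X\|_{\infty}$. Everything else is a direct substitution into \eqref{eq:holder} followed by an appeal to the already-proved Theorem~\ref{thm:cm}.
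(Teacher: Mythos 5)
Your proposal is correct and follows the same route as the paper: apply the H\"older bound \eqref{eq:holder} to $\overline{A}X$, bound $\|\overline{A}X\|_1$ and $\|\overline{A}X\|_{\infty}$ by $\max_i(\overline{\alpha}_i)\cdot\max_i\big(\sum_j X_{ij}\big)$ using the nonnegativity and symmetry of $X$, and invoke Theorem~\ref{thm:cm}. The paper compresses this into two sentences, but your spelled-out row-sum/column-sum computation is exactly the argument it has in mind.
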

\begin{proof}
A sufficient condition for \eqref{eq:cm1} based on \eqref{eq:holder} is
\begin{eqnarray}
\|\overline{A}X\|_1<1\ \ \text{and}\ \ \|\overline{A}X\|_{\infty}<1.\label{eq:holder2}
\end{eqnarray}
Given symmetric matrix $X$, \eqref{eq:cm2} is thereafter sufficient for \eqref{eq:holder2}.
\end{proof}

\subsection{Limitation of the Non-Incremental Control}\label{sec:disslope}
The local voltage control \eqref{dyn:D1b} is non-incremental, as it decides the total amount of reactive power (instead of the change in reactive power) based on the deviation of current voltage from the nominal value. Intuitively, such a control may lead to over-actuation and oscillatory behavior. In order to have converging or stable behavior, the control function should not be too aggressive, i.e.,  have small (absolute) derivative. This can also be seen from Theorem~\ref{thm:cm}, and in the case of the piece-wise linear control function (\ref{eq:plf}),  implies a small  $\alpha_i$ value. 

On the other hand, seen from the equivalent objective \eqref{eq:tradeoff}, smaller cost functions $C_i(q_i)$ are preferred for better voltage regulation. However, a small cost function implies large derivative of the control function; see, e.g., the cost function \eqref{eq:integral}  that becomes smaller as $\alpha_i$ takes larger value, as well as the numerical examples in Section~\ref{sec:numerical} . 

Hence, there is a contention or fundamental limitation for the non-incremental control: control function with smaller derivative is preferred for convergence, while for better voltage regulation at the equilibrium control function with larger derivative is desired. This motivates us to seek new local voltage control schemes that are not subject to such a limitation, as will be seen in the next section.

\section{Forward Engineering: Decoupling Equilibrium and Dynamical Properties}\label{sec:forward1}
The optimization-based model \eqref{eq:defminF} does not only provide a way to characterize the equilibrium of the local voltage control (see Theorem~\ref{thm:eq}), but also 
suggests a principled way to engineer the control. New design goal such as fairness and economic efficiency can be taken incorporated by engineering the objective function in \eqref{eq:defminF}, and more importantly, new control schemes with better dynamical properties can be designed based on various optimization algorithms such as the (sub)gradient algorithm. In this section, we apply two iterative optimization algorithms to design local voltage control schemes that can decouple the dynamical property from the equilibrium property and have less restrictive convergence conditions than the non-incremental local voltage control studied in the previous section. 

\subsection{Local Voltage Control Based on the (Sub)gradient Algorithm}\label{sect:sg}
Given an optimization problem, we may apply different algorithms to solve it. A common algorithm that often admits distributed implementation is the (sub)gradient method \cite{BoVa04}.  Applying it to the problem \eqref{eq:defminF} leads to the following voltage control:
\begin{eqnarray}
q_i(t+1) &=& \left[ q_i(t) - \gamma_2 \frac{\partial F(q(t))}{\partial q_i}\right]_{\Omega_i},~i\in\hN,\label{eq:ica}
\end{eqnarray}
where $\gamma_2>0$ is the constant stepsize and the (sub)gradient is calculated as follows:

\begin{equation} 
\frac{\partial F(q(t))}{\partial q_i} 
= \left\{ \begin{array}{ll}
-f_i^{-1} (q_i(t)) + v_i(t)-v^{\text{nom}}&\mbox{if}~q_i(t) \neq 0, \\
v_i(t)-v^{\text{nom}}&\mbox{if}~q_i(t) = 0~\text{and}\\
&\hspace{-10mm}-{\delta}/{2} \leq v_i(t)-v^{\text{nom}} \leq {\delta_i}/{2}, \\
-{\delta_i}/{2} + v_i(t)-v^{\text{nom}} &\mbox{if}~q_i(t) = 0~\text{and}\\
&\hspace{-5mm} v_i(t)-v^{\text{nom}} > {\delta_i}/{2}, \\
{\delta_i}/{2} + v_i(t)-v^{\text{nom}} &\mbox{if}~q_i(t) = 0~\text{and}\\
&\hspace{-5mm}v_i(t)-v^{\text{nom}} <  -{\delta_i}/{2}.  \end{array} \right. \label{eq:sg}      
\end{equation}
The above control is \emph{incremental} as the change in reactive power (instead of the total reactive power) is based on the voltage deviation from the nominal value.  It is also distributed, since the decision at each bus $i\in\hN$ depends only on its current reactive power $q_i$ and voltage $v_i$. 

We thus obtain the following dynamical system $\cD_2$:
\vspace{-2mm}
\begin{subequations}
	\begin{empheq}[left=\empheqlbrace]{align}
	\hspace{2mm}v(t)\hspace{2mm}&=\hspace{2mm}Xq(t)+\tilde{v}\label{dyn:D3a}\\
	\hspace{2mm}q_i(t+1)\hspace{2mm}&=\hspace{2mm}\left[ q_i(t) - \gamma_2 \frac{\partial F(q(t))}{\partial q_i}\right]_{\Omega_i},~i\in\hN.\label{dyn:D2b}	\end{empheq} 
\end{subequations}


The following result is immediate.
\begin{theorem}
	\label{thm:eqn}
	Suppose Assumption~\ref{A1} holds.  Then there exists a unique equilibrium point for the dynamical system $\cD_2$. 
	Moreover, a point $(v^*, q^*)$ is an equilibrium if and only if $q^*$ is the unique
	optimal solution of problem \eqref{eq:defminF} and $v^* = Xq^* + \tilde{v}$.
\end{theorem}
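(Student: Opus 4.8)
The plan is to mirror the argument used for Theorem~\ref{thm:eq}, exploiting the fact that the projected-(sub)gradient fixed-point condition for $\cD_2$ coincides with the variational inequality characterizing the minimizer of $F$ over $\Omega$. First I would write an equilibrium of $\cD_2$ as a fixed point of the map $q \mapsto [q - \gamma_2 \nabla F(q)]_\Omega$, where $\nabla F(q)$ is the particular subgradient selection given in \eqref{eq:sg}. By inspection this selection is exactly an element of $\partial F(q) = \partial C(q) + (Xq+\tilde v) - v^{\text{nom}}$: when $q_i \neq 0$ it equals $-f_i^{-1}(q_i) + v_i - v^{\text{nom}}$, and when $q_i = 0$ it picks the endpoint of the deadband interval $[-\delta_i/2,\ \delta_i/2] \subseteq \partial C_i(0)$ dictated by the sign of $v_i - v^{\text{nom}}$.

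Next I would invoke the standard characterization of the Euclidean projection onto a closed convex set: $z = [y]_\Omega$ if and only if $(y - z)^\top(q - z) \le 0$ for all $q \in \Omega$. Applying this with $y = q^* - \gamma_2 \nabla F(q^*)$ and $z = q^*$, and dividing by $\gamma_2 > 0$, yields $\nabla F(q^*)^\top(q - q^*) \ge 0$ for all $q \in \Omega$. Hence $(v^*, q^*)$ with $v^* = Xq^* + \tilde v$ is an equilibrium of $\cD_2$ if and only if there is a subgradient $\nabla F(q^*) \in \partial F(q^*)$ satisfying this variational inequality, which is precisely the first-order optimality condition for \eqref{eq:defminF}.

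Finally, I would invoke Lemma~\ref{lemma:X}: since $X$ is positive definite the quadratic term of $F$ is strongly convex, and $C$ is convex because each $f_i^{-1}$ is non-increasing, so $F$ is strongly convex on $\Omega$. Therefore the first-order optimality condition is both necessary and sufficient, and \eqref{eq:defminF} admits a unique minimizer $q^*$. Combined with the equivalence above and $v^* = Xq^* + \tilde v$, this gives existence and uniqueness of the equilibrium of $\cD_2$ together with the claimed characterization.

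The only delicate point, and the one I would spell out most carefully, is the identification of the subgradient selection in \eqref{eq:sg} with an element of $\partial F$ at points where some $q_i = 0$ and the deadband is active; the remainder is a direct transcription of the projection/optimality argument already used for Theorem~\ref{thm:eq}. Note that this statement concerns equilibria only, so $\gamma_2$ plays no role beyond being positive, and convergence is handled separately.
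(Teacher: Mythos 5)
Your proposal follows exactly the route the paper intends: the paper gives no separate proof of Theorem~\ref{thm:eqn} (it calls the result ``immediate''), and your reduction of the fixed-point condition of the projected map to the variational inequality for \eqref{eq:defminF}, combined with strong convexity from Lemma~\ref{lemma:X}, is a direct transcription of the machinery already used for Theorem~\ref{thm:eq}. So in approach you and the paper agree.

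There is, however, one genuine gap, and it sits precisely at the ``delicate point'' you flag but do not fully resolve. Your pivotal equivalence reads ``$(v^*,q^*)$ is an equilibrium iff \emph{there exists} a subgradient in $\partial F(q^*)$ satisfying the variational inequality,'' whereas an equilibrium of $\cD_2$ is a fixed point of the map built from the \emph{particular} selection \eqref{eq:sg}. The forward direction is fine: the selection is a valid element of $\partial F(q^*)$, so a fixed point satisfies the optimality condition. The reverse direction is not automatic: optimality guarantees only that \emph{some} element of $\partial F(q^*)$ satisfies the variational inequality, not that the specific selection does. Concretely, suppose $q_i^*=0$ lies in the interior of $\Omega_i$ and $0<|v_i^*-v_i^{\text{nom}}|<\delta_i/2$. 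Then $q^*$ can be optimal, since $0\in[v_i^*-v_i^{\text{nom}}-\delta_i/2,\ v_i^*-v_i^{\text{nom}}+\delta_i/2]\subseteq\partial_{q_i} F(q^*)$, yet \eqref{eq:sg} returns the nonzero value $v_i^*-v_i^{\text{nom}}$ (the midpoint of that interval, not its minimal-norm element), so the update moves $q_i$ away from $0$ and $q^*$ is not a fixed point of $\cD_2$. This is the same phenomenon the paper itself concedes in Section~\ref{sect:sg} when it notes that $\cD_2$ may only reach a neighborhood of the optimum near the nondifferentiable point; the gap is therefore inherited from the theorem statement rather than introduced by you. To close it one must either take $\delta_i=0$, assume $q_i^*\neq 0$ for all $i$, or replace the selection at $q_i=0$ in \eqref{eq:sg} by the minimal-norm subgradient, in which case your argument goes through verbatim.
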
 





Since the feasible sets are bounded, we also have the bounded (sub)gradient of  $F(q)$ with some constant $G>0$:
\begin{eqnarray}
\|\nabla_q F(q)\|_2\leq G,\ \forall q\in\Omega.\label{eq:boundG}
\end{eqnarray}

\begin{theorem}\label{the:subgraconv}
Suppose Assumption~\ref{A1} hold. The dynamical system $\cD_2$ converges as
	\begin{eqnarray}
	\limsup_{t\rightarrow\infty} \sum_{\tau=1}^t\frac{F(q(\tau))-F(q^*)}{t}=\gamma_2^2G^2.\label{eq:subgconv}
	\end{eqnarray}
\end{theorem}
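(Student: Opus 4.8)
The plan is to use the standard subgradient-method convergence analysis with constant stepsize. First I would write down the nonexpansiveness of the Euclidean projection onto the convex set $\Omega$: for any $y$ and any $q\in\Omega$, $\|[y]_\Omega - q\|_2 \leq \|y - q\|_2$. Applying this with $y = q(\tau) - \gamma_2 \nabla_q F(q(\tau))$ and $q = q^*$ gives the basic one-step inequality
\begin{eqnarray*}
\|q(\tau+1) - q^*\|_2^2 &\leq& \|q(\tau) - q^*\|_2^2 - 2\gamma_2\, \nabla_q F(q(\tau))^{\top}(q(\tau) - q^*) + \gamma_2^2 \|\nabla_q F(q(\tau))\|_2^2.
\end{eqnarray*}
Then I would invoke convexity of $F$ (which holds since $C$ is convex and $\frac12 q^{\top}Xq + q^{\top}\Delta\tilde v$ is convex by Lemma~\ref{lemma:X}), in the subgradient form $\nabla_q F(q(\tau))^{\top}(q(\tau) - q^*) \geq F(q(\tau)) - F(q^*)$, together with the bounded-subgradient assumption \eqref{eq:boundG}, to obtain
\begin{eqnarray*}
F(q(\tau)) - F(q^*) &\leq& \frac{1}{2\gamma_2}\Big( \|q(\tau) - q^*\|_2^2 - \|q(\tau+1) - q^*\|_2^2 \Big) + \frac{\gamma_2}{2} G^2.
\end{eqnarray*}

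Next I would sum this inequality over $\tau = 1, \ldots, t$. The distance terms telescope, leaving
\begin{eqnarray*}
\sum_{\tau=1}^{t} \big( F(q(\tau)) - F(q^*) \big) &\leq& \frac{1}{2\gamma_2}\|q(1) - q^*\|_2^2 + \frac{t\,\gamma_2}{2} G^2,
\end{eqnarray*}
and dividing by $t$ and taking $\limsup_{t\to\infty}$ kills the first term (it is $O(1/t)$), so that
\begin{eqnarray*}
\limsup_{t\rightarrow\infty} \sum_{\tau=1}^{t} \frac{F(q(\tau)) - F(q^*)}{t} &\leq& \frac{\gamma_2}{2} G^2.
\end{eqnarray*}
A couple of things I would check carefully: that $q(1)\in\Omega$ and that $q^*$ exists and lies in $\Omega$ (guaranteed by Theorem~\ref{thm:eqn}), and that \eqref{eq:sg} genuinely furnishes an element of the subdifferential $\partial F(q(\tau))$ — this is exactly the subdifferential computation already carried out in the proof of Theorem~\ref{thm:eq}, so the subgradient inequality is legitimate even at points where $C_i$ is nondifferentiable ($q_i=0$ with deadband).

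The only real discrepancy is the bound I obtain, $\frac{\gamma_2}{2}G^2$, versus the claimed $\gamma_2^2 G^2$ in \eqref{eq:subgconv}; the right-hand side of the theorem as stated looks like it should read $\gamma_2 G^2/2$ (or the constant stepsize convention differs by a factor), so I would either reconcile the constants with the paper's stepsize bookkeeping or simply present the cleaner bound $\frac{\gamma_2}{2}G^2$ and note it implies the asserted rate up to the constant. The main (and essentially only) obstacle is thus not the analysis — which is textbook — but making sure the subgradient used in the update is certified to be in $\partial F$ and that the constant matches the paper's convention; everything else is the routine telescoping argument above.
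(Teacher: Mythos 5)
Your proof follows exactly the same route as the paper's: nonexpansiveness of the projection onto $\Omega$, the subgradient inequality for the convex $F$, and telescoping the one-step distance recursion. Your constant $\tfrac{\gamma_2}{2}G^2$ is in fact the correct one --- the paper's own derivation silently drops the factor $2\gamma_2$ when passing from $-2\gamma_2\,\nabla_q F(q(t))^{\top}(q(t)-q^*)$ to $-(F(q(t))-F(q^*))$, a step that is only valid when $2\gamma_2\ge 1$, so the stated constant $\gamma_2^2G^2$ (and the ``$=$'' in \eqref{eq:subgconv}, which both arguments only establish as ``$\le$'') reflects that bookkeeping slip rather than any difference in approach.
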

\begin{proof}
We characterize the distance between $q(t+1)$ and $q^*$ as:
	\begin{eqnarray}
		&&\|q(t+1)-q^*\|_2^2\nonumber\\
		&\leq& \|q(t)-\gamma_2\nabla_q F(q(t))-q^*\|_2^2\nonumber\\
		&=& \|q(t)-q^*\|_2^2+\gamma^2_2\|\nabla_q F(q(t))\|_2^2-2\gamma_2(q(t)-q^*)^{\top}\nabla_q F(q(t))\nonumber\\[2pt]
		&\leq&\|q(t)-q^*\|_2^2+ \gamma_2^2G^2-(F(q(t))-F(q^*))\nonumber\\[-3pt]
		&\leq&\|q(1)-q^*\|_2^2+ t\gamma_2^2G^2-\sum_{\tau=1}^t(F(q(\tau))-F(q^*)),\nonumber
	\end{eqnarray}
	where the first inequality is due to non-expansiveness of projection operator, the second inequality is because of the definition of subgradient as well as the bounded gradient \eqref{eq:boundG}, and the last inequality is by repeating previous steps.
	
	As $\|q(t+1)-q^*\|_2^2\geq 0$, it follows that:
	\begin{eqnarray}
	\sum_{\tau=1}^t\frac{F(q(\tau))-F(q^*)}{t}\leq \|q(1)-q^*\|_2^2/t+ \gamma_2^2G^2.
	\end{eqnarray}
	When $t\rightarrow\infty$, we have \eqref{eq:subgconv}.
\end{proof}


%

Notice that for any control functions $f_i$ (that satisfies Assumptions~\ref{A1}--\ref{A2}), one can always find a small enough stepsize $\gamma_2$ such that $\cD_2$ converges to a neighborhood of the $(v^*, q^*)$ of required accuracy on running average. Moreover, as shown in \cite{farivar2015local}, when $q^*$ is not close to the non-differentiable point, $\cD_2$ converges exactly to the optimum. In contrast, the convergence condition \eqref{eq:cm1} for the non-incremental voltage control $\cD_1$ does constrain the allowable control functions $f_i$. Therefore, $\cD_2$ permits better voltage regulation than $\cD_1$; see the discussion at the end of Section~\ref{sec:eq} and the simulation results in Fig.~\ref{fig:slopes}

Nevertheless, the (sub)gradient nature of $\cD_2$ may prevent it from converging to the exact optimal point. This could happen when the optimum is close to the non-differentiable point ($q^*=0$ in this case) where the value of subgradient \eqref{eq:sg} changes abruptly if $\delta_i\neq 0$. Moreover,  the (sub)gradient computation \eqref{eq:sg} requires computing the inverse of the control function $f_i$, which can be computationally expensive for general control functions, as well as tracking the value of $v_i$ with respect to deadband $\pm\delta_i/2$. These limitations motivate us to design another incremental control scheme with better convergence and lower implementation complexity.

\subsection{Local Voltage Control Based on the Pseudo-Gradient Algorithm}\label{sec:forward2}
The pseudo-gradient can provide a good search direction for an optimization problem without requiring the objective function to be differentiable; see, e.g., \cite{wen2004optimal}. Applying it to the problem \eqref{eq:defminF} leads to the following incremental voltage control at bus $i\in\cN$:   
\begin{subequations}\label{eq:pg}
\begin{eqnarray}
\hspace{-6mm}q_i(t+1) \hspace{-2.5mm}&=&\hspace{-2.5mm}\big[ q_i(t) - \gamma_3 \big(q_i(t)- f_i(v_i(t)-v_i^{\text{nom}})\big)\big]_{\Omega_i},  \label{eq:pga}\\
\hspace{-8.5mm}&=&\hspace{-2.5mm}  \big[(1-\gamma_3)q_i(t)+\gamma_3 f_i(v_i(t)-v_i^{\text{nom}})\big]_{\Omega_i}\label{eq:pgb}
\end{eqnarray}
\end{subequations}
where $\gamma_3 >0$ is a constant stepsize/weight and $q_i- f_i(v_i-v_i^{\text{nom}})$ is the pseudo-gradient. The above control is distributed, and is simpler to implement than the control \eqref{eq:ica}.

With \eqref{eq:pg} we obtain the following dynamical system $\cD_3$:
\begin{subequations}
	\begin{empheq}[left=\empheqlbrace]{align}
	\hspace{0mm}v(t)\hspace{0mm}&=\hspace{0mm}Xq(t)+\tilde{v},\label{dyn:D3a}\\
	\hspace{0mm}q_i(t+1)\hspace{0mm}&=\hspace{0mm}\left[ q_i(t) - \gamma_3 \Big(q_i(t)- f_i\big(v_i(t)-v_i^{\text{nom}}\big)\Big)\right]_{\Omega_i},\nonumber\\
	&\hspace{50mm}i\in\hN.\label{dyn:D3b}
	\end{empheq} 
\end{subequations}
Notice that $\cD_3$ has the same equilibrium condition as the dynamical systems $\cD_1$ and $\cD_2$.  The following result is immediate.
\begin{theorem}
	\label{thm:eqnpg}
	Suppose Assumption~\ref{A1} holds.  There exists a unique equilibrium point for the dynamical system $\cD_3$. 
	Moreover, a point $(v^*, q^*)$ is an equilibrium if and only if $q^*$ is the unique
	optimal solution of problem \eqref{eq:defminF} and $v^* = Xq^* + \tilde{v}$.
\end{theorem}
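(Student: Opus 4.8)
The plan is to show that the set of equilibria of $\cD_3$ coincides exactly with the set of equilibria of $\cD_1$, after which every assertion in the statement follows verbatim from Theorem~\ref{thm:eq}. By definition, $(v^*, q^*)$ is an equilibrium of $\cD_3$ precisely when $v^* = Xq^* + \tilde v$ and, for every $i\in\hN$,
\[
q_i^* \;=\; \big[\, q_i^* - \gamma_3\big(q_i^* - f_i(v_i^* - v_i^{\text{nom}})\big)\,\big]_{\Omega_i}.
\]
Since each $\Omega_i$ is a closed interval, hence convex, the Euclidean projection onto it obeys the standard variational characterization: $y = [x]_{\Omega_i}$ if and only if $y\in\Omega_i$ and $(x-y)(z-y)\le 0$ for all $z\in\Omega_i$.

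First I would apply this characterization to the displayed equilibrium equation with $x = q_i^* - \gamma_3\big(q_i^* - f_i(v_i^* - v_i^{\text{nom}})\big)$ and $y = q_i^*$. The condition $(x-y)(z-y)\le 0$ becomes $-\gamma_3\big(q_i^* - f_i(v_i^* - v_i^{\text{nom}})\big)(z - q_i^*)\le 0$ for all $z\in\Omega_i$. Because $\gamma_3>0$, this is equivalent to $\big(f_i(v_i^* - v_i^{\text{nom}}) - q_i^*\big)(z - q_i^*)\le 0$ for all $z\in\Omega_i$, which is exactly the variational characterization of $q_i^* = \big[f_i(v_i^* - v_i^{\text{nom}})\big]_{\Omega_i}$. (Equivalently, one may read off from \eqref{eq:pgb} that $q_i^*$ is a fixed point of $q_i\mapsto \big[(1-\gamma_3)q_i + \gamma_3 f_i(v_i^* - v_i^{\text{nom}})\big]_{\Omega_i}$ iff it is a fixed point of $q_i\mapsto \big[f_i(v_i^* - v_i^{\text{nom}})\big]_{\Omega_i}$.) Stacking over $i\in\hN$ and keeping $v^* = Xq^* + \tilde v$, the equilibrium condition of $\cD_3$ is identical to that of $\cD_1$.

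Then I would invoke Theorem~\ref{thm:eq}: under Assumption~\ref{A1}, $\cD_1$ has a unique equilibrium, and $(v^*, q^*)$ is an equilibrium of $\cD_1$ if and only if $q^*$ is the unique optimal solution of \eqref{eq:defminF} and $v^* = Xq^* + \tilde v$. Since the equilibria of $\cD_3$ are precisely the equilibria of $\cD_1$, the same existence, uniqueness, and optimization-based characterization transfer immediately to $\cD_3$, which proves the theorem.

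I do not anticipate any genuine obstacle — this is why the result is stated as ``immediate.'' The only point requiring a little care is the direction of the variational inequality and the fact that the positive stepsize $\gamma_3$ can be cancelled without affecting the fixed-point set; no assumption on the size of $\gamma_3$ (in particular $\gamma_3 \lessgtr 1$) is needed here, since only equilibria, not convergence, are at issue.
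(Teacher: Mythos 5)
Your proof is correct and follows exactly the route the paper intends: the paper simply notes that $\cD_3$ ``has the same equilibrium condition as $\cD_1$ and $\cD_2$'' and declares the result immediate from Theorem~\ref{thm:eq}, and your variational-inequality argument (cancelling the positive factor $\gamma_3$ in the projection's optimality condition) is precisely the detail that justifies that claim. Nothing is missing.
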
 

We now analyze the convergence of the dynamical system $\cD_3$. We first introduce the following useful results. 

Denote by $\nabla_v f$ the diagonal matrix with each entry $\big(\nabla_v f\big)_{ii}$ representing the (sub)gradient defined as
	\begin{eqnarray}
	(\nabla_v f)_{ii}~~\left\{\begin{array}{ll}
	= f_i'(v_i) & \mbox{if}~ v_i\in(-\infty, -\delta_i/2)\cup\\
	& (-\delta_i/2,\delta_i/2) \cup (\delta_i/2, +\infty)\\
	\in [f_i'(v_i^-), f_i'(v_i^+)] & \mbox{if}~ v_i=-\delta_i/2\\
	\in [f_i'(v_i^+), f_i'(v_i^-)] & \mbox{if}~ v_i=\delta_i/2
	\end{array}\right.,
	\end{eqnarray}
which is bounded as $-\overline{\alpha}_i\leq (\nabla_v f)_{ii}\leq 0$ based on Assumptions~\ref{A1}--\ref{A2}. Denote by $\lambda$ any eigenvalue of the matrix $\nabla_vf X$. Consider $\nabla_vf X$'s similar matrix $X^{1/2}\nabla_vfX^{1/2}$, which is symmetric and negative semidefinite with real and nonpositive eigenvalues. Therefore, eigenvalues of the original \emph{asymmetric} matrix $\nabla_vfX$ are also \emph{real} and \emph{nonpositive}. Similarly, all the eigenvalues of $\overline{A}X$ are \emph{real} and \emph{positive}.

\begin{theorem}\label{thm:mcon2}
	Suppose Assumptions~\ref{A1}--\ref{A2} hold. If the stepsize $\gamma_3$ satisfies the following condition: 
	\begin{eqnarray}
		0<\gamma_3<2/\big(1+\lambda_{max}(\overline{A}X)\big),\label{eq:d3_0}
	\end{eqnarray}
	then the dynamical system $\cD_3$ converges to its unique equilibrium point.
\end{theorem}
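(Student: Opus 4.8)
The plan is to rewrite $\cD_3$ as a fixed-point iteration $q(t+1)=g_3(q(t))$ with
\[
g_3(q)\;:=\;\big[\,(1-\gamma_3)q+\gamma_3\, f(Xq+\tilde v-v^{\text{nom}})\,\big]_{\Omega},
\]
and to show that $g_3$ is a contraction with respect to the weighted Euclidean norm $\|z\|_X:=\sqrt{z^{\top}Xz}$, which is a genuine norm because $X\succ0$ by Lemma~\ref{lemma:X}. Since, by Theorem~\ref{thm:eqnpg}, the unique equilibrium $q^{*}$ is the unique fixed point of $g_3$, once the contraction is established the Banach fixed-point theorem (on the closed set $\Omega$) yields $q(t)\to q^{*}$ geometrically, and then $v(t)=Xq(t)+\tilde v\to v^{*}$.

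For the contraction estimate, set $h(q):=(1-\gamma_3)q+\gamma_3 f(Xq+\tilde v-v^{\text{nom}})$ and fix $q,q'\in\Omega$. Exactly as in the proof of Lemma~\ref{lemma:lipschitz}, a coordinatewise mean-value/secant argument for the monotone, piecewise-differentiable $f_i$ produces a diagonal matrix $\nabla_v f$ (depending on $q,q'$) with $-\overline{\alpha}_i\le(\nabla_v f)_{ii}\le0$ such that $f(Xq+\tilde v-v^{\text{nom}})-f(Xq'+\tilde v-v^{\text{nom}})=\nabla_v f\,X(q-q')$, whence $h(q)-h(q')=N(q-q')$ with $N:=(1-\gamma_3)I+\gamma_3\,\nabla_v f\,X$. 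Although $N$ is asymmetric, it is self-adjoint for $\langle\cdot,\cdot\rangle_X$: the similar matrix $X^{1/2}NX^{-1/2}=(1-\gamma_3)I+\gamma_3\,X^{1/2}\nabla_v f X^{1/2}$ is symmetric, so the eigenvalues of $N$ are real and $\|N\|_X=\|X^{1/2}NX^{-1/2}\|_2$ equals the spectral radius of $N$. Moreover $-\nabla_v f\preceq\overline{A}$ (both diagonal), hence $-X^{1/2}\nabla_v f X^{1/2}\preceq X^{1/2}\overline{A}X^{1/2}$, and since $X^{1/2}\overline{A}X^{1/2}$ is similar to $\overline{A}X$ this gives $0\le\lambda_{\max}(-X^{1/2}\nabla_v f X^{1/2})\le\lambda_{\max}(\overline{A}X)$. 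Therefore every eigenvalue of $N$ lies in $\big[\,1-\gamma_3\big(1+\lambda_{\max}(\overline{A}X)\big),\,1-\gamma_3\,\big]$, which sits inside $(-1,1)$ exactly when $0<\gamma_3<2/\big(1+\lambda_{\max}(\overline{A}X)\big)$, i.e., under \eqref{eq:d3_0}. Hence $\|N\|_X=:c<1$ with $c$ independent of $q,q'$, so $\|h(q)-h(q')\|_X\le c\|q-q'\|_X$, and composing with the projection $[\cdot]_{\Omega}$ gives $\|g_3(q)-g_3(q')\|_X\le c\|q-q'\|_X$.

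The crux is the spectral bookkeeping in the middle paragraph. Because $N$ is asymmetric, its \emph{Euclidean} operator norm is not controlled by its spectrum, so a direct $\ell_2$ estimate would only deliver the far more restrictive condition $\sigma_{\max}(\overline{A}X)<1$ of Theorem~\ref{thm:cm}; the whole point is to measure distances in the $X$-geometry, in which $N$ becomes self-adjoint so that $\|N\|_X$ collapses to its spectral radius, and in which the clean bound $\lambda_{\max}(\overline{A}X)$ emerges from the congruence $-X^{1/2}\nabla_v f X^{1/2}\preceq X^{1/2}\overline{A}X^{1/2}$. The step-size window in \eqref{eq:d3_0} is then precisely the set of $\gamma_3$ that pulls the spectrum of $N=I-\gamma_3(I-\nabla_v f X)$ into the open unit disk. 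The one remaining point to handle with a little care is that the projection $[\cdot]_{\Omega}$ is Euclidean while the contraction lives in $\|\cdot\|_X$; since $\Omega$ is a box this compatibility has to be argued (e.g.\ via nonexpansiveness of the projection, as invoked in the proof of Theorem~\ref{thm:cm}) rather than merely taken for granted.
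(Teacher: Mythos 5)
Your route is genuinely different from the paper's. The paper argues via the Jacobian: it computes $\nabla_q g_3=(1-\gamma_3)\mathcal{I}+\gamma_3\nabla_v fX$, shows its eigenvalues $1-\gamma_3+\gamma_3\lambda$ lie in $(-1,1)$ under \eqref{eq:d3_0} because the eigenvalues $\lambda$ of $\nabla_v fX$ are real and lie in $[-\lambda_{\max}(\overline{A}X),0]$, and disposes of the projection by noting that an active projection zeroes a row of the Jacobian and appealing to Gershgorin. Your secant factorization $h(q)-h(q')=N(q-q')$ together with the observation that $N$ is self-adjoint in the $X$-inner product upgrades that pointwise spectral statement to a uniform operator-norm bound $\|N\|_X=\rho(N)\le c<1$; this is exactly the ingredient needed to convert a spectral-radius condition into a genuine global contraction, and on the unconstrained map $h$ your argument is tighter than the paper's local eigenvalue analysis.

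However, the final composition with $[\,\cdot\,]_{\Omega}$ is a genuine gap, not a formality, and you leave it open. Since $\Omega$ is a box, $[h(q)]_{\Omega}-[h(q')]_{\Omega}=\Theta\big(h(q)-h(q')\big)$ for some $\Theta=\diag(\theta_1,\dots,\theta_n)$ with $\theta_i\in[0,1]$; nonexpansiveness in $\|\cdot\|_X$ would require $\Theta X\Theta\preceq X$ for all such $\Theta$, and this fails for non-diagonal $X\succ0$. For instance with $\Theta=\diag(1,0)$ and $X=\left(\begin{smallmatrix}1&0.9\\0.9&1\end{smallmatrix}\right)$ the matrix $X-\Theta X\Theta=\left(\begin{smallmatrix}0&0.9\\0.9&1\end{smallmatrix}\right)$ is indefinite, and $b=(1,-1)^{\top}$ gives $\|\Theta b\|_X^2=1>\|b\|_X^2=0.2$, so the Euclidean box projection strictly expands in the $X$-norm. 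The matrix $X$ here is emphatically non-diagonal (every entry is at least the reactance of the line out of bus $0$), and $\theta_i=0$ actually occurs (e.g., singleton $\Omega_i$ at buses without inverters, or two iterates saturating at the same bound), so the Euclidean nonexpansiveness invoked in Theorem~\ref{thm:cm} does not transfer to the $X$-geometry in which your contraction lives. To close the proof you would need to either project in the $X$-norm (which is not what $\cD_3$ does), restrict attention to trajectories on which the projection is inactive, or give a separate argument for the saturated coordinates --- which is precisely the delicate point the paper's own Gershgorin remark is also glossing over.
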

\begin{proof}
	Write $\cD_3$ equivalently as a mapping $g_3$:
	\begin{eqnarray}
	q(t+1) = g_3(q(t)) := \big[ (1-\gamma_3)q(t)+\gamma_3 f(v(q(t)))\big]_{\Omega}.	\label{eq:dynamic3}
	\end{eqnarray}
The Jacobian matrix of $g_3$ without projection operator is computed as
\begin{eqnarray}
	\nabla_q g_3=(1-\gamma_3) \mathcal{I}_N+\gamma_3 \nabla_v f X,
\end{eqnarray}
where $\mathcal{I}_N$ is $N\times N$ identity matrix. On the other hand, when projection operator is active for some $q_i$, the $i$th row of the resulting $\nabla_q g_3$ is all 0. So, by Gershgorin circle theorem \cite{varga2009matrix}, the magnitude of $\nabla_q g_3$'s eigenvalue without active projection has a larger bound than that with projection. Thus it is sufficient to consider $g_3$ without the projection operator in this proof.

	Denote by $z$ the eigenvector of matrix $\nabla_v f X$ corresponding to $\lambda$. By definition one has $\nabla_v f  X z = \lambda z$. Therefore,
	\begin{eqnarray}
		\nabla_q g_3 z=(1-\gamma_3+\gamma_3\lambda)z,
	\end{eqnarray}
	that is, the corresponding eigenvalue of $\nabla_q g_3$ with respect to $\lambda$ is $1-\gamma_3+\gamma_3\lambda$. 
	To ensure that $g_3$ is stable, one must have \cite{jahangiri2013distributed, galor2007discrete}
	\begin{eqnarray}
		-1<1-\gamma_3+\gamma_3\lambda<1\label{eq:eigenvalue}
	\end{eqnarray} 
	for any eigenvalue $\lambda$ of $\nabla_v f X$. \eqref{eq:d3_0} is sufficient for the left-hand side of \eqref{eq:eigenvalue} and the right-hand side always holds because $\lambda$ is nonpositive. This completes the proof.
\end{proof}


We conclude that the voltage control $\cD_3$ based on the pseudo-gradient algorithm converges to the optimum with properly chosen stepsize, while the control $\cD_2$ based on the subgradient algorithm usually converges to within a small neighborhood of the optimum on running average.

\begin{remark}
Notice that, when  $\gamma_3\leq 1$ in, the control  \eqref{eq:pg} has a nice interpretation of the new decision $q_i(t+1)$ being a (positively-)weighted sum of the decision $q_i(t)$ at the previous time and the local control $u_i(t)=f_i(v_i(t)-v_i^{\text{nom}})$ in reactive power. Similar approaches in literature are also identified as exponentially weighted moving average (EWMA) control and delayed control, etc. However, here we do not require $\gamma_3\leq 1$ for $\cD_3$ to converge, as long as the condition~\eqref{eq:d3_0} is satisfied.
\end{remark}



\begin{figure}
	\centering
	\includegraphics[scale=0.265]{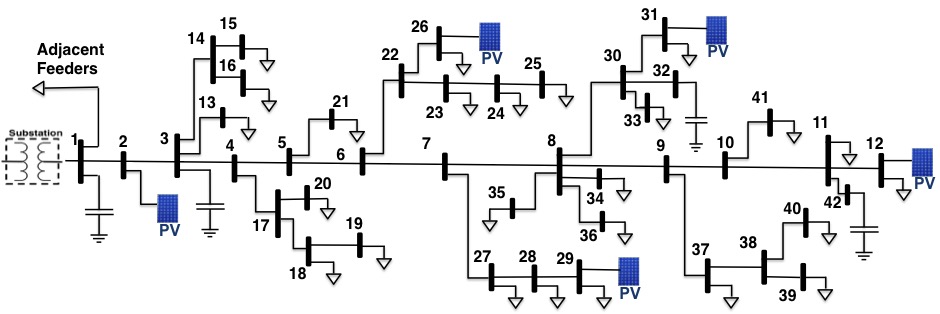}
	\caption{Circuit diagram for SCE distribution system.}
	\label{42bus}
\end{figure}

\begin{table*}
	\centering
	\begin{tabular}{|c|c|c|c|c|c|c|c|c|c|c|c|c|c|c|c|c|c|}
		\hline
		\multicolumn{18}{|c|}{Network Data}\\
		\hline
		\multicolumn{4}{|c|}{Line Data}& \multicolumn{4}{|c|}{Line Data}& \multicolumn{4}{|c|}{Line Data}& \multicolumn{2}{|c|}{Load Data}& \multicolumn{2}{|c|}{Load Data}&\multicolumn{2}{|c|}{\!\!PV Generators\!\!}\\
		\hline
		From&To&R&X&From&To& R& X& From& To& R& X& Bus& Peak & Bus& Peak & Bus&\!\!Capacity\!\!\\
		Bus.&Bus.&$(\Omega)$& $(\Omega)$ & Bus. & Bus. & $(\Omega)$ & $(\Omega)$ & Bus.& Bus.& $(\Omega)$ & $(\Omega)$ & No.&  MVA& No.& MVA& No.& MW\\
		\hline
		1	&	2	&	0.259	&	0.808	&	8	&	34	&	0.244	&	0.046 	&	18	&	19	&	0.198	&	 0.046	&	11	&	 0.67	&	28	&	 0.27 	&				 &		 \\

		2	&	3	&	0.031	&	0.092	&	8	&	36	&	0.107	 &	0.031 	&	22	&	26	&	0.046	&	 0.015	&	12	&	0.45		 &	29	&	0.2 &			 2	&	 1\\

		3	&	4	&	0.046	&	0.092	&	8	&	30	&	0.076	 &	0.015 	&	22	&	23	&	0.107	&	 0.031	&	13	&	0.89	 &	31	&	0.27	  &			26	&	 2	 \\

		3	&	13	&	0.092	&	0.031	&	8	&	9	&	0.031	 &	0.031 	&	23	&	24	&	0.107	&	 0.031	&	15	&	0.07	 &	33	&	0.45	 	 &			29	&	 1.8 	 \\

		3	&	14	&	0.214	&	0.046	&	9	&	10	&	0.015	 &	0.015	&	24	&	25	&	0.061	&	 0.015	&	16	&	0.67	 &	34	&	1.34  &			31	&	 2.5 	 \\

		4	&	17	&	0.336	&	0.061	&	 9	&	37	&	0.153	 &	0.046 	&	27	&	28	&	0.046	&	 0.015	&	18	&	0.45	 &	35	&	0.13	  &			12	&	 3 	 \\

		4	&	5	&	0.107	&	0.183	&	10	&	11	&	0.107	 &	0.076 	&	28	&	29	&	0.031	&	0		&	19	&	1.23	 &	36	&	0.67	  &			&		 \\

		5	&	21	&	0.061	&	0.015	&	10	&	41	&	0.229	 &	0.122 	&	30	&	31	&	0.076	&	 0.015  &	20	&	0.45	 &	37	&	0.13	 	&			 &			 \\

		5	&	6	&	0.015	&	0.031	&	11	&	42	&	0.031	 &	0.015 	&	30	&	32	&	0.076	&	 0.046	&	21	&	0.2 &	39	&	0.45 	&			  &	 \\

		6	&	22	&	0.168	&	0.061	&	11	&	12	&	0.076	 &	0.046 	&	30	&	33	&	0.107	&	 0.015	&	23	&	0.13		 &	40	&	0.2 	&		 &	\\

		6	&	7	&	0.031	&	0.046	&	14	&	16	&	0.046	 &	0.015 	&	37	&	38	&	0.061	&	0.015	&	24	&	0.13	 &	 41	&	0.45		&		&		 \\		\cline{15-18}

		7	&	27	&	0.076	&	0.015	&	14	&	15	&	0.107	 &	0.015	&	38	&	39	&	0.061	&	0.015	&	 25	&	0.2 &	 \multicolumn{4}{c|}{$V_{base}$ = 12.35 KV}		 	\\

		7	&	8	&	0.015	&	0.015	&	17	&	18	&	0.122	 &	0.092	&	38	&	40	&	0.061	&	 0.015	&	 26	&	0.07 &	 \multicolumn{4}{c|}{$S_{base}$ = 1000 KVA} 	 	\\

		8	&	35	&	0.046	&	0.015	&	17	&	20	&	0.214	 &	0.046	&	 	&	 	&	 	&	 	&	27	&	0.13		 &	 \multicolumn{4}{c|}{$Z_{base}$ = 152.52 $\Omega$}   	 	 \\	
		
		\hline
	\end{tabular}
	\caption{Network Parameters of the SCE Circuit: Line impedances, peak spot load KVA, Capacitors and PV generation's nameplate ratings. }
	\label{data}
\end{table*}

\section{Numerical examples}\label{sec:numerical}
Consider a distribution feeder of South California Edison (SCE) with a high penetration of photovoltaic (PV) generation.
As shown in Fig.~\ref{42bus}, bus 1 is the substation (root bus) and five PV generators are integrated at buses 2, 12, 26, 29, and 31. As we aim to study the Volt/VAR control through PV inverters, all shunt capacitors are assumed to be off. Table~\ref{data} contains the network data including the line impedance, the peak MVA demand of loads, and the capacity of the PV generators. It is important to note that all studies are run with a full AC power flow model with MATPOWER \cite{zimmerman2011matpower} instead of its linear approximation. As will be seen, the results we develop for the linearized model are corroborated numerically with the full power flow model. 

In all numerical studies, we implement homogeneous piecewise linear droop control functions~(\ref{eq:plf}) of the IEEE 1547.8 Standard \cite{standard1547a} for all PV inverters, with their deadbands from 0.98 p.u. to 1.02 p.u. and slopes $\alpha_i$ to be determined.

\begin{figure}
	\centering
	\includegraphics[trim = 0mm 0mm 0mm 0mm, clip, scale=0.33]{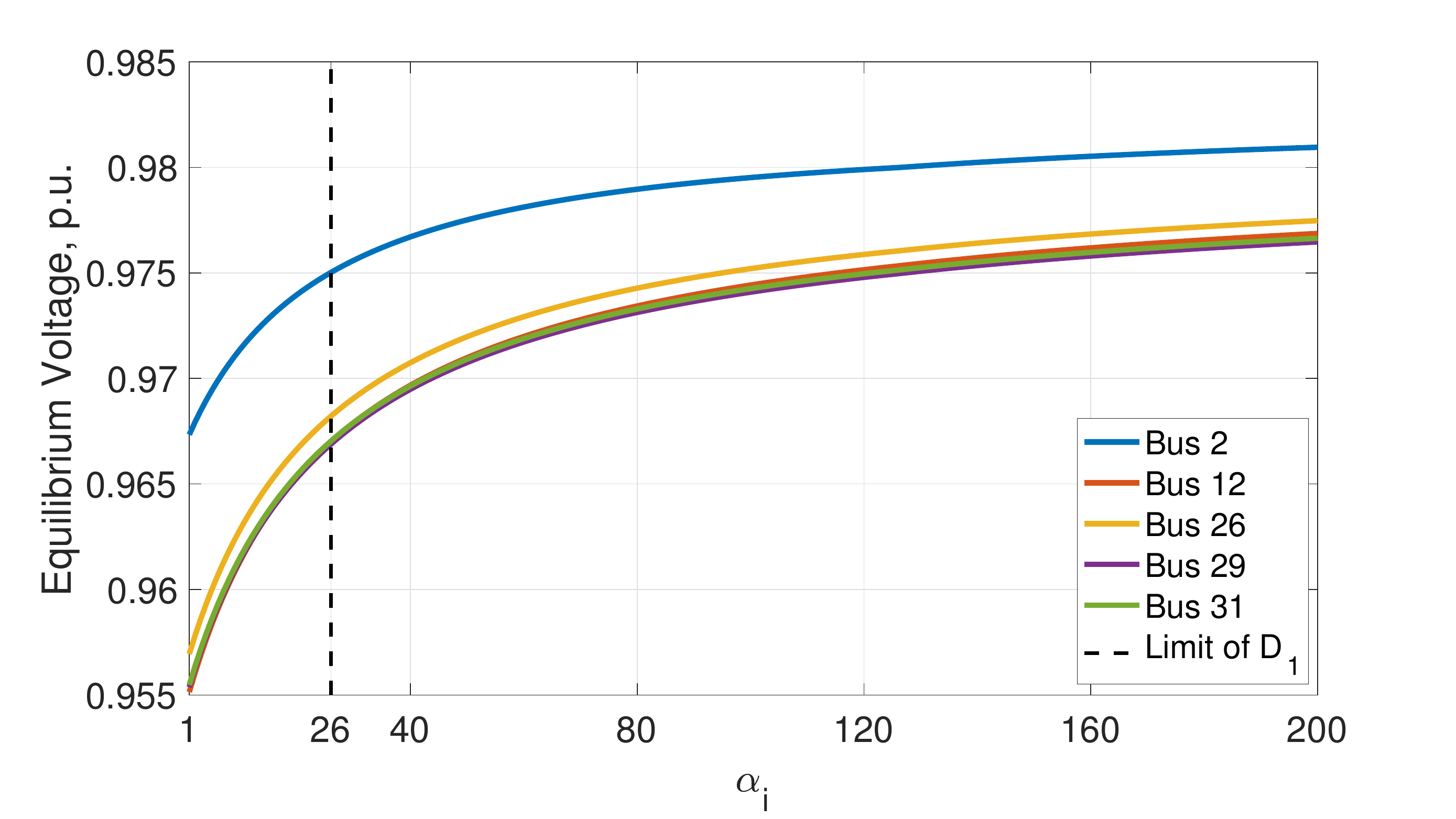} 
	\caption{Equilibrium voltage versus the $\alpha_i$ value: As $\alpha_i$ increases,  the equilibrium voltage $v_i^*$ deviates less from the nominal value.}	\label{fig:slopes}
\end{figure}

\subsection{Equilibrium}
As discussed in Section~\ref{sec:disslope}, large (absolute) slopes of the control function lead to better voltage regulation at the equilibrium. To show this, we change $\alpha_i$ from 1 to 200 and record the corresponding equilibrium voltages $v^*$. As shown in Fig.~\ref{fig:slopes}, $v^*$ gets closer to $v^{\text{nom}}$ as $\alpha_i$ increases. This confirms our previous discussion that steeper control functions are to be implemented in order to achieve smaller voltage deviation from the nominal value.



\subsection{Dynamics}
\subsubsection{Convergence of Dynamical System $\cD_1$}
As shown in Fig.~\ref{fig:d1converge}, the dynamical system $\cD_1$ displays less stable behavior as the control function become steeper with the increase of $\alpha_i$ value, till it ends up with oscillation when $\alpha_i$ becomes too large. See also the vertical dash line on Fig.~\ref{fig:slopes} beyond which there is no convergence. As discussed in Section ~\ref{sec:disslope}, there is a contention between convergence and equilibrium performance for the non-incremental voltage control \eqref{dyn:D1b}: A smaller (absolute) slope is preferred for convergence, while a larger one is preferred for voltage regulation at the equilibrium. 


\begin{figure}
	\centering
	\includegraphics[trim = 0mm 0mm 0mm 0mm, clip, scale=0.45]{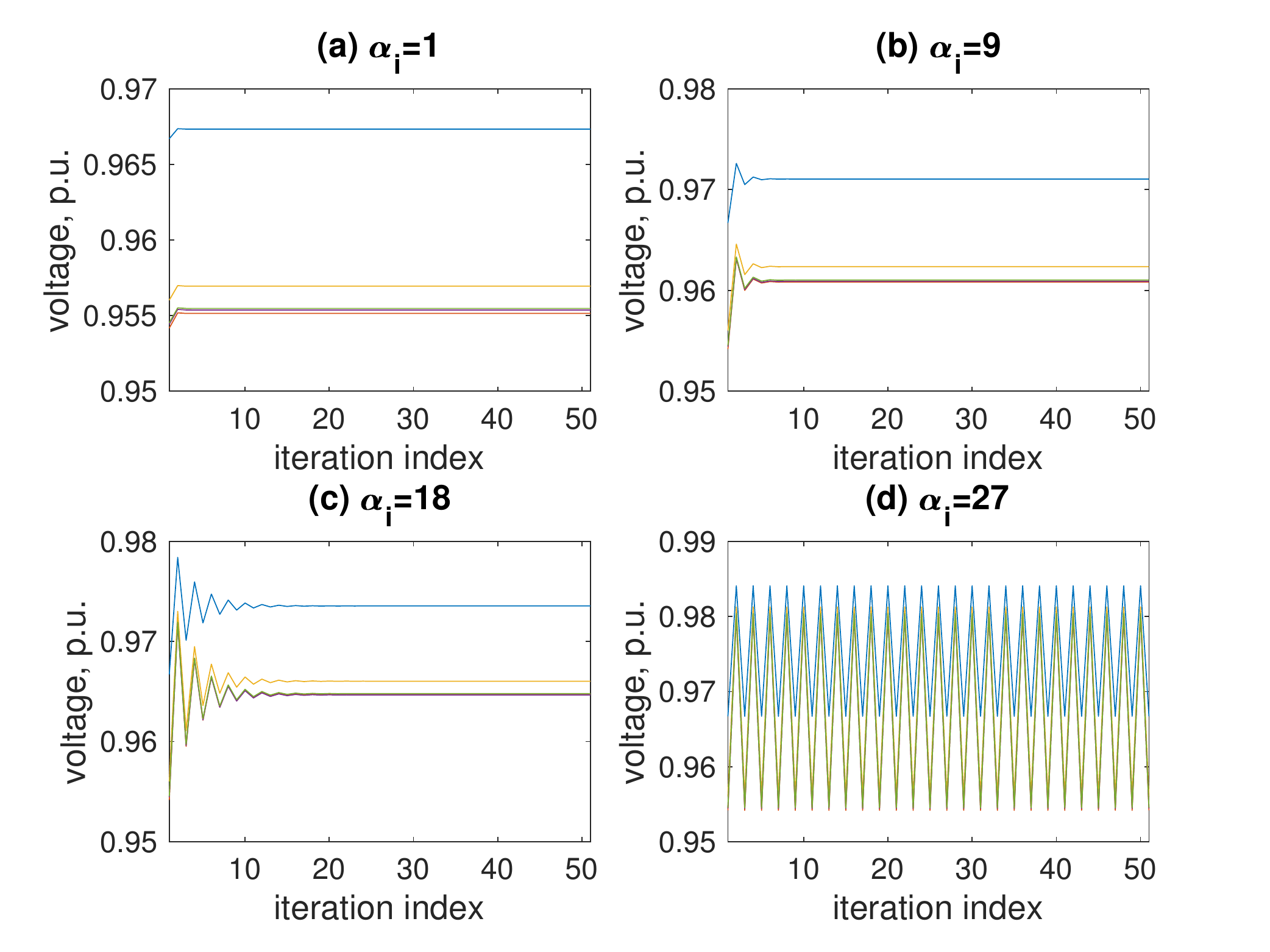} 
	\caption{Evolution of voltage of the dynamical system $\cD_1$ with different slopes of the piecewise linear control function: Voltage does not converge when the (absolute) slope of the control function become too large (when $\alpha_i>26$ in this example).}	\label{fig:d1converge}
\end{figure}

\subsubsection{Convergence of Dynamical Systems $\cD_2$ and $\cD_3$} As discussed in Section~\ref{sec:forward1}, given any control function, $\cD_2$ and $\cD_3$  converge if small enough stepsizes are chosen, and we can thus decouple the equilibrium property from the dynamical property. For instance, when $\alpha_i= 27$, the dynamical system $\cD_1$ does not converge; see Fig.~\ref{fig:d1converge}(b).  However, as shown in Fig.~\ref{fig:d2d3converge}, when the stepsizes $\gamma_2$ and $\gamma_3$  are properly chosen,  the dynamical systems $\cD_2$ and $\cD_3$ converge. 



\begin{figure}
	\centering
	\includegraphics[trim = 0mm 0mm 0mm 0mm, clip, scale=0.45]{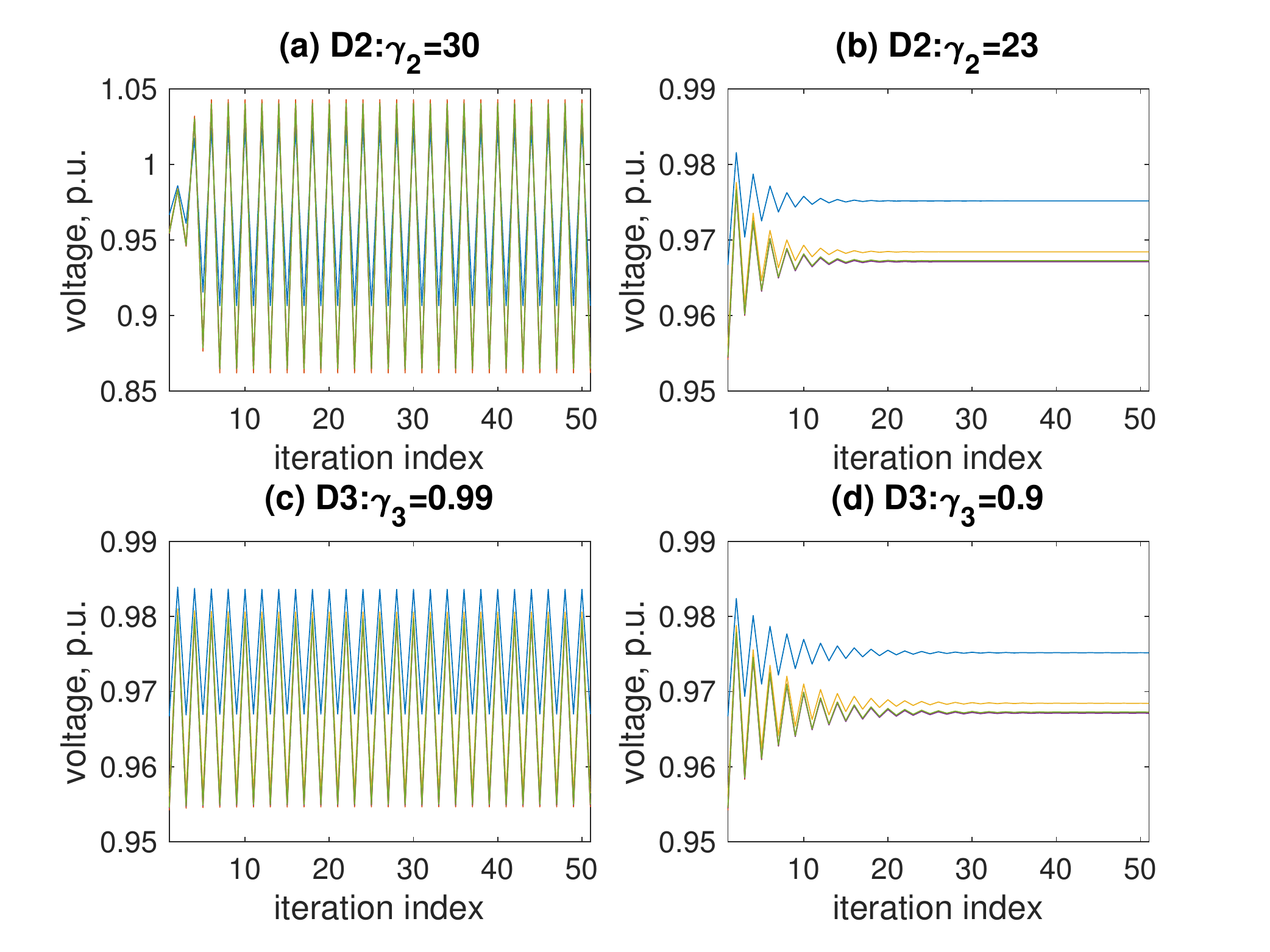} 
	\caption{Evolution of voltage of the dynamical systems $\cD_2$ and $\cD_3$ with $\alpha_i=27$:  Convergence is ensured with small enough stepsizes.}	\label{fig:d2d3converge}
\end{figure}

\begin{figure}
	\centering
	\includegraphics[trim = 0mm 0mm 0mm 0mm, clip, scale=0.3]{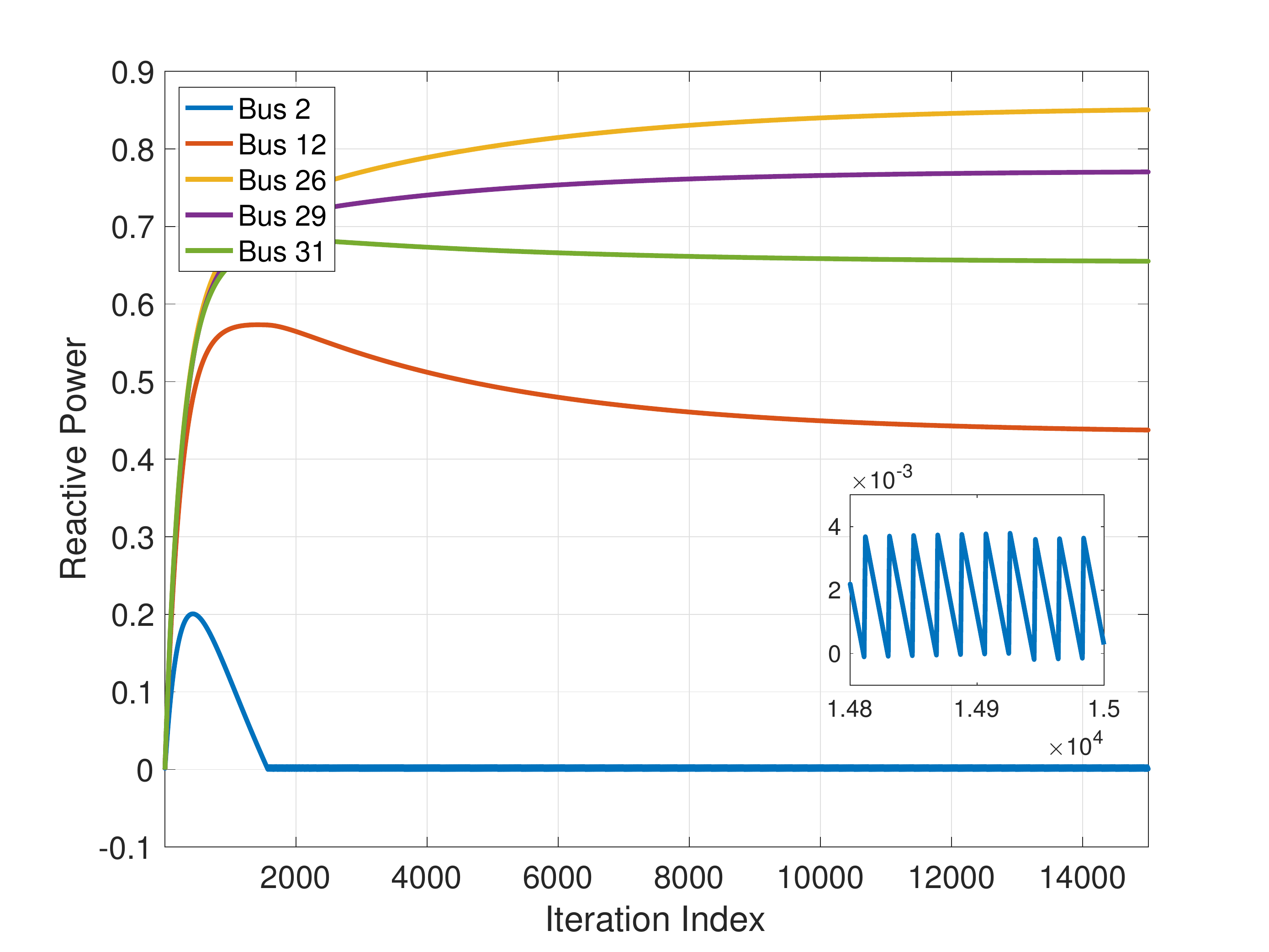} 
	\caption{Convergence of the dynamical system $\cD_2$ to within a small neighborhood of the equilibrium.}	\label{fig:d2converge}
\end{figure}

\begin{figure}
	\centering
	\includegraphics[trim = 0mm 0mm 0mm 0mm, clip, scale=0.3]{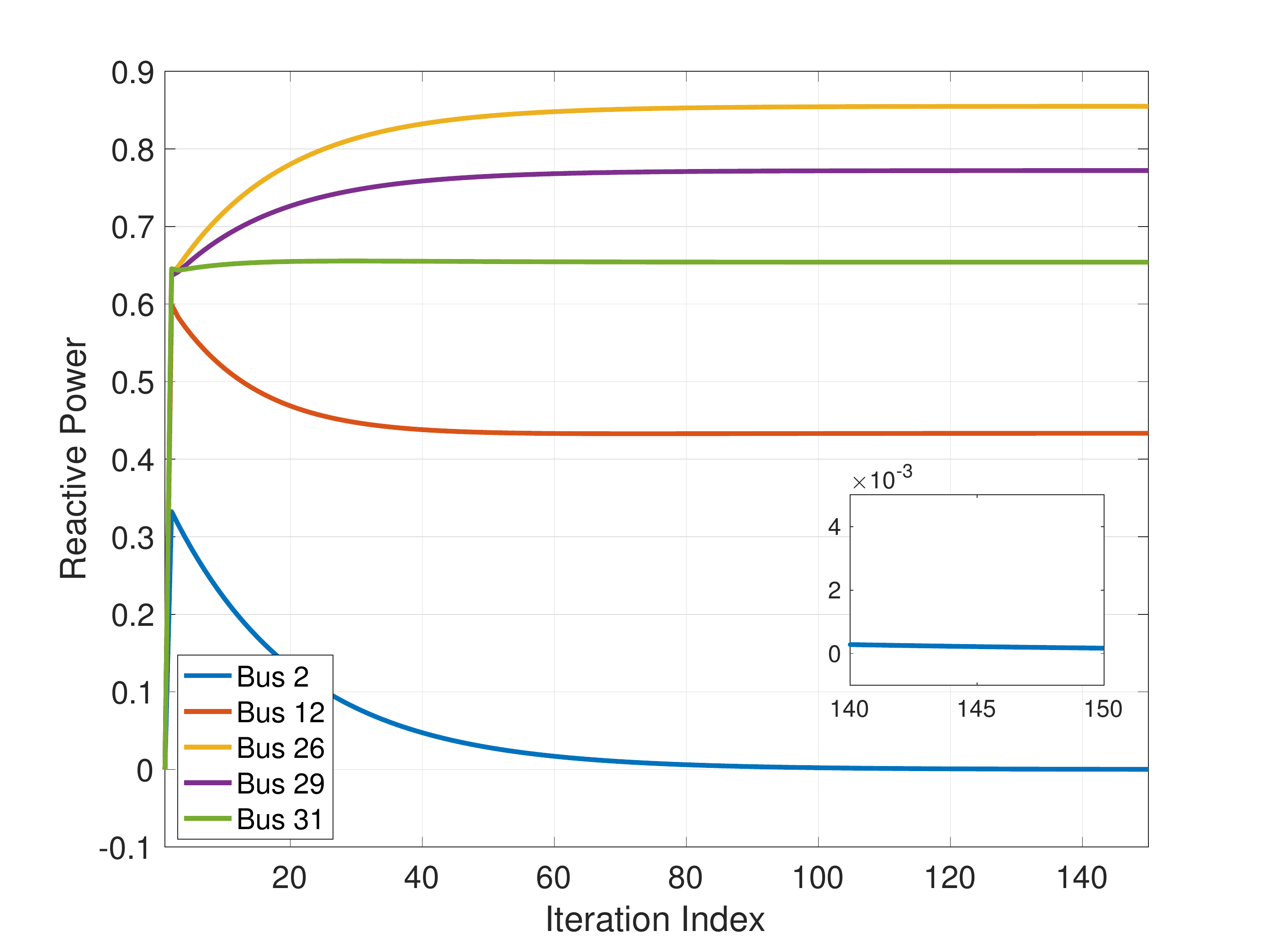} 
	\caption{Convergence of the dynamical system $\cD_3$ to the equilibrium.}	\label{fig:d3converge}
\end{figure}

\subsubsection{Convergence at Non-Differentiable Point} As discussed in Section~\ref{sect:sg}, the dynamical system $\cD_2$ based on subgradient algorithm can only converge to within a small neighborhood of the equilibrium  if it is at a nonsmoooth point of the objective function \eqref{eq:obj}. We tune the parameters such that the equilibrium reactive power provisioned at certain bus --- the bus 2 in this case --- is close to zero. As shown in Fig.~\ref{fig:d2converge},  $\cD_2$ eventually converges to a small region around the optimum, even with very small stepsize. On the other hand, as shown in Fig.~\ref{fig:d3converge}, the dynamical system $\cD_3$ based on pseudo-gradient algorithm converges to the equilibrium despite the non-smoothness of the objective function at the equilibrium. See the   embedded windows in Fig.~\ref{fig:d2converge}-\ref{fig:d3converge}. 

\section{Conclusion}\label{sec:conclusion}
We have investigated local voltage control with a general class of control functions in distribution networks. We show that the power system dynamics with non-incremental local voltage control can be seen as distributed algorithm for solving a well-defined optimization problem (reverse engineering). The reverse engineering further reveals a limitation of the non-incremental voltage control: the convergence condition is restrictive and prevents better voltage regulation at equilibrium. This motivates us to design two incremental local voltage control schemes based on different algorithms for solving the same optimization problem (forward engineering). The new control schemes decouple the dynamical property from the  equilibrium property, and have much less restrictive convergence conditions. 
This work presents another step towards developing a new foundation -- network dynamics as optimization algorithms -- for distributed realtime control and optimization of future power networks



\bibliographystyle{IEEEtran}
\bibliography{reverseforward.bib,ChenReferencesPower.bib}

\bibliographystyle{IEEEtran}
\bibliography{biblio.bib}

\end{document}